 \newtheorem{thm}{Theorem}[section]
 \newtheorem*{thm*}{Theorem}
 \newtheorem{cor}[thm]{Corollary}
 \newtheorem{lem}[thm]{Lemma}
 \newtheorem{prop}[thm]{Proposition}
 \theoremstyle{definition}
 \newtheorem{defn}[thm]{Definition}
 \theoremstyle{remark}
 \newtheorem{rem}[thm]{Remark}
 \newtheorem{ex}{Example}
 \numberwithin{equation}{section}
\newcommand{\vertiii}[1]{{\left\vert\kern-0.25ex\left\vert\kern-0.25ex\left\vert #1
  \right\vert\kern-0.25ex\right\vert\kern-0.25ex\right\vert}}
\newcommand{\sslash}{\mathbin{/\mkern-6mu/}}
\newcommand{\VERT}[1]{{\left\vert\kern-0.25ex\left\vert\kern-0.25ex\left\vert #1 
    \right\vert\kern-0.25ex\right\vert\kern-0.25ex\right\vert}}
\begin{document}
\title{Commutative C$^\ast$ algebras and Gelfand theory through phase space methods}
\author{Robert Fulsche, Oliver F\"{u}rst}
\date{\today}

\maketitle
\begin{abstract}
    We show how the Gelfand spectrum of certain commutative operator algebras can be studied based on the theorem of Stone and von Neumann. The method presented is a natural addition to the tools of \emph{quantum spectral synthesis}, which were recently used to characterize certain commutative Toeplitz algebras on the Fock space. Our method applies to this setting and also to more general abelian phase spaces. Besides characterizing Gelfand spectra of such commutative operator algebras, we also prove an extension of this result to the operator-valued case.
\end{abstract}



\section{Introduction}
Finding abelian subalgebras of certain non-commutative operator algebras is a classical problem in the field of operator theory and operator algebras. In recent years, this problem motivated significant research, for example in the field of Toeplitz operators (see, e.g., \cite{Bauer_Rodriguez2022, Vasilevski_2008}). A recurring theme is relating commutativity of certain subalgebras with invariance under actions of certain groups. Among many of the different interesting settings that one can investigate, the commutative subalgebras of the Toeplitz algebra on the Fock space is a frequent choice \cite{Esmeral_Maximenko2016, Esmeral_Vasilevski2016, Dewage_Olafsson2022}. Recently, tools from the theory of \emph{quantum harmonic analysis} \cite{Werner1984, Fulsche2020, Luef_Skrettingland2018} were used to investigate commutativity of Toeplitz algebras on the Fock space \cite{Dewage_Mitkovski2023, Dewage_Mitkovski2024, Fulsche_Rodriguez2023}. In particular, in \cite{Fulsche_Rodriguez2023} the methods of \emph{quantum spectral synthesis} were used by the first-named author and Miguel Rodriguez to characterize certain classes of commutative Toeplitz algebras affiliated with Lagrangian subgroups of the phase space $\Xi = \mathbb R^{2n}$, extending previously known results \cite{Esmeral_Vasilevski2016}. After identifying these commutative algebras, their Gelfand theory was described. In \cite{Fulsche_Rodriguez2023}, this was done by a certain ad-hoc approach. Indeed, the Gelfand theory of the characterized commutative algebras can also be described by a suitable application of the Stone-von Neumann theorem. The present note is supposed to describe this approach in detail. 

In \cite{Fulsche_Rodriguez2023}, it turned out that the commutative algebras that were found are affiliated with Lagrangian subgroups of the phase space. As, for example, described in \cite{Mumford1991}, Lagrangian subgroups of phase spaces serve also another purpose: To set up certain canonical representations of the CCR relations. By the Stone-von Neumann theorem, this canonical representation is unitarily equivalent to the one considered before. As it turns out, this unitary equivalence maps the commutative algebras under consideration to algebras of multiplication operators. Hence, this precisely yields a description of the Gelfand theory.

To increase the significance of the present work, we present these results for general abelian phase spaces (instead of just $\mathbb R^{2n}$); the setting of quantum harmonic analysis with respect to such phase spaces was recently discussed in \cite{Fulsche_Galke2023}. As the methods from \cite{Fulsche_Rodriguez2023} extend verbatim to this larger class of phase spaces, using the tools presented in \cite{Fulsche_Galke2023}, we find it appropriate to discuss our present results in this more general setting. Nevertheless, we will not repeat much of the theory discussed in \cite{Fulsche_Rodriguez2023}. Indeed, most of what we present in the present work can be understood without reference to the techniques from our previous works.

Let us explain the contents of the previous note in some more details: We will usually consider $\Xi$ to be a locally compact abelian (lca) group, usually written additively. By $\widehat{\Xi}$ we will denote the Pontryagin dual of $\Xi$. Let $m: \Xi \times \Xi \to \mathbb T$ a multiplier on it. By this, we mean a (measurable) map satisfying
\begin{align*}
    m(x+y,z)m(x,y) &= m(x,y+z)m(y,z),\\
    m(x,0) = m(0,x) &= 1,
\end{align*}
for all $x, y, z \in \Xi$. For such a multiplier, we define the map $\sigma: \Xi \times \Xi \to \mathbb T$ by
\begin{align*}
    \sigma(x,y) = \frac{m(x,y)}{m(y,x)}.
\end{align*}
It is not hard to see that $\sigma$ is always an alternating bicharacter.
\begin{defn}
    The tuple $(\Xi, m)$ consisting of the lca group $\Xi$ and the multiplier $m$ is said to be an \emph{abelian phase space} if the map
    \begin{align*}
        \Xi \ni x \mapsto \sigma(\cdot, x) \in \widehat{\Xi},
    \end{align*}
    is an isomorphism of lca groups.
\end{defn}
The theorem of Stone-von Neumann (and Mackey-Baggett-Kleppner) states the following:
\begin{thm}{\cite[Theorem 3.3]{Baggett_Kleppner1973}}
    Let $(\Xi, m)$ be a phase space. Then, there exists a unique (up to unitary equivalence) irreducible projective unitary representation $(\mathcal H, U)$ with $m$ as its multiplier, i.e., $U_x U_y = m(x,y) U_{x+y}$ for all $x, y \in \Xi$. 
\end{thm}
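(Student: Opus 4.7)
The plan is to prove existence by an explicit Schrödinger-type construction on a Lagrangian subgroup of $\Xi$, and to reduce uniqueness to the fact that the twisted group $C^\ast$-algebra of $(\Xi,m)$ is isomorphic to the compact operators on a separable Hilbert space.

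For existence, I would first select a closed subgroup $L \subset \Xi$ that is maximal with respect to the isotropy condition $\sigma(x,y)=1$ for all $x,y \in L$; such a \emph{Lagrangian} subgroup exists by a Zorn's lemma argument, and non-degeneracy of $\sigma$ (which is the defining property of a phase space) forces maximal isotropy to coincide with $L = L^\perp$, where $L^\perp := \{x \in \Xi : \sigma(x,y) = 1 \text{ for all } y \in L\}$. The isomorphism $\Xi \cong \widehat{\Xi}$ induced by $\sigma$ restricts and quotients to an identification $\Xi/L \cong \widehat{L}$. On $\mathcal H := L^2(\Xi/L)$, for $x \in \Xi$, define $U_x$ as the product of translation by the coset $[x] \in \Xi/L$, multiplication by the character $\sigma(x,\cdot)$ on $\Xi/L$, and a scalar built from $m$; the cocycle identity $U_xU_y = m(x,y)U_{x+y}$ then follows by direct manipulation using the defining relations of the multiplier. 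Irreducibility holds because the closed algebra generated by the multiplication operators contains $L^\infty(\Xi/L)$, while the translations act transitively on this algebra, so no non-trivial closed invariant subspace can exist.

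For uniqueness, I would integrate an arbitrary irreducible projective representation $(\mathcal H', V)$ against $f \in L^1(\Xi)$ to obtain bounded operators $\pi_V(f) = \int_\Xi f(x) V_x\, dx$. These turn $L^1(\Xi)$ into a Banach $\ast$-algebra under the twisted convolution $(f \star_m g)(z) = \int_\Xi f(x) g(z-x) m(x, z-x)\, dx$, whose enveloping $C^\ast$-algebra is the twisted group $C^\ast$-algebra $C^\ast(\Xi, m)$. The key analytical step is a Plancherel-type identity showing that $f \mapsto \pi_V(f)$ extends to an isometry from $L^2(\Xi)$ onto the Hilbert-Schmidt operators on $\mathcal H'$, with isometry constant independent of the particular representation $V$; this forces $C^\ast(\Xi,m) \cong \mathcal K(\mathcal H')$, so that any two irreducible projective representations with multiplier $m$ produce unitarily equivalent representations of the same simple $C^\ast$-algebra, and hence are themselves unitarily equivalent as projective representations of $\Xi$.

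The main obstacle, and the true analytic content of the Stone-von Neumann-Baggett-Kleppner theorem, lies precisely in establishing this Plancherel identity in the full generality of abelian phase spaces. In the Heisenberg case $\Xi = \mathbb R^{2n}$ the identity reduces to an explicit Gaussian computation, but in the abstract setting one must combine Pontryagin duality on $\Xi$ with the $\sigma$-duality $\Xi \cong \widehat{\Xi}$ and carefully handle measurable sections of $\Xi \to \Xi/L$. Once the isometry is in place, the remaining assembly is routine: an intertwining unitary between $(\mathcal H, U)$ and $(\mathcal H', V)$ can be extracted from the image of any rank-one projection under the isomorphisms $\mathcal K(\mathcal H) \cong C^\ast(\Xi,m) \cong \mathcal K(\mathcal H')$.
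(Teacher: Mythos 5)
First, note that the paper does not prove this statement at all: it is quoted verbatim from Baggett--Kleppner, so there is no internal proof to compare against. (Section~2 of the paper does, in effect, redo the \emph{existence} half for a first-countable Lagrangian subgroup, which is the part of your plan closest to the text.) Judged on its own terms, your proposal has two genuine gaps. The first is in the construction of the Schr\"odinger-type representation: for $x \in \Xi$, the function $\sigma(x,\cdot)$ descends to a character of $\Xi/L$ only when $\sigma(x,l)=1$ for all $l \in L$, i.e.\ only when $x \in L^{\sigma} = L$. So ``multiplication by the character $\sigma(x,\cdot)$ on $\Xi/L$'' is not defined for general $x$, and there is in general no splitting $\Xi \cong L \times \Xi/L$ that would let you extract the ``$L$-component'' of $x$ as in the classical $\mathbb R^{2n}$ picture. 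The standard repair --- and the one this paper carries out --- is to realize the induced representation on the covariant function space $L^2(\Xi \sslash L)$ (using that $m|_{L\times L}$ is symmetric, hence of the form $\alpha(x+y)/(\alpha(x)\alpha(y))$), and only then transfer to $L^2(\Xi/L)$ via a Borel cross section $\gamma$ of $\Xi \to \Xi/L$; the existence of such a section is itself a nontrivial point (Feldman--Greenleaf), which is exactly why the paper imposes first-countability of $L$. Your sketch elides all of this into ``a scalar built from $m$,'' which is where the actual work lives; without it, even $L^2$-nontriviality of the representation space and the cocycle identity are not established.

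The second gap is in the uniqueness half. The Plancherel-type isometry of $L^2(\Xi)$ onto the Hilbert--Schmidt operators is equivalent to \emph{square-integrability} of the projective representation, and this is precisely the hypothesis that the Baggett--Kleppner theorem (and this paper's Theorem~1.4) does \emph{not} assume --- the paper explicitly stresses that its main result is independent of square-integrability, in contrast to Theorem~1.3. You correctly identify this identity as ``the main obstacle'' and ``the true analytic content,'' but then do not prove it, so the central step of your uniqueness argument is missing rather than merely routine; and if one is only willing to assume the axioms of an abelian phase space, it is not clear that the $C^{*}(\Xi,m)\cong\mathcal K(\mathcal H')$ route is available at all. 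Baggett and Kleppner instead prove uniqueness via Mackey's theory of induced representations for the central extension $1 \to \mathbb T \to \Xi^{m} \to \Xi \to 1$, which avoids any integrability of matrix coefficients. As it stands, your argument proves the theorem only under the additional hypothesis of square-integrability (where it essentially reproduces the known quantum-harmonic-analysis/Rieffel-type proof), not in the generality claimed.
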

Let now $H \subset \Xi$ be a closed subgroup. We define
\begin{align*}
    H^\sigma = \{ z \in \Xi: ~\sigma(z, w) = 1, \text{ all } w \in H\}.
\end{align*}
$H^\sigma$ is easily seen to be again a closed subgroup. By basic results from Pontryagin duality, $H^{\sigma \sigma} = H$. $H$ is said to be \emph{Lagrangian} if $H^\sigma = H$. 

We set:
\begin{align}
    \mathcal L(\mathcal H)_H = \{ A \in \mathcal L(\mathcal H): ~U_h A = A U_h, \text{ all } h \in H\}.
\end{align}
In \cite{Fulsche_Rodriguez2023}, the method of (quantum) spectral synthesis was employed to prove the following result for the phase space $\Xi = \mathbb R^{2n}$, endowed with the multiplier $m((x, \xi), (y,\eta)) = \exp(-iy\xi)$. The same methods can, without significant difference, be employed to prove the following result. Again, we note that the methods of quantum harmonic analysis on general abelian phase spaces have been discussed in \cite{Fulsche_Galke2023}. Based on the same arguments presented there, one can prove:
\begin{thm}\label{thm:Fulsche_Rodriguez}
    Let $(\Xi, m)$ be an abelian phase space such that the unique irreducible $m$-representation $(\mathcal H, U)$ is square integrable. 
    Let $H \subset \Xi$ be a closed subgroup. Then, $\mathcal L(\mathcal H)_H$ is a maximal translation invariant abelian von Neumann subalgebra if and only if $H$ is Lagrangian.
\end{thm}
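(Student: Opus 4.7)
The plan is to handle each implication separately via a Schrödinger realization supplied by Stone–von Neumann; translation invariance of $\mathcal L(\mathcal H)_H$ is automatic from the projective relation $U_x U_h = \sigma(x,h) U_h U_x$ together with $\sigma(x,h)\sigma(h,x) = 1$, so it suffices to control the abelian and maximality properties. For $H$ Lagrangian, $m|_{H \times H}$ is symmetric (since $\sigma \equiv 1$ on $H$), so the standard induction procedure yields an explicit irreducible projective $m$-representation on $L^2(\widehat H)$ in which $U_h$ acts as multiplication by $\chi \mapsto \overline{\chi(h)}$ for $h \in H$; for general $x \in \Xi$, one extends using the isomorphism $\Xi/H \cong \widehat H$ induced by $\sigma$, yielding a combination of multiplication and translation on $\widehat H$. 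Stone–von Neumann provides a unitary equivalence with $(\mathcal H, U)$, and under it $\mathcal L(\mathcal H)_H$ is identified with the commutant of $\{M_{\overline{\chi(h)}} : h \in H\}$. Since these characters separate points of $\widehat H$ by Pontryagin duality, they generate the MASA $L^\infty(\widehat H)$, whose commutant is itself. Thus $\mathcal L(\mathcal H)_H$ is maximal abelian in $\mathcal L(\mathcal H)$, hence a fortiori maximal among translation-invariant abelian von Neumann subalgebras.

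For the converse, I would first show that $H$ must be coisotropic, i.e., $H^\sigma \subseteq H$. For any $x \in H^\sigma$ the operator $U_x$ commutes with every $U_h$, $h \in H$, so $U_x \in \mathcal L(\mathcal H)_H$; commutativity of this algebra then forces $\sigma(x,y) = 1$ for all $x, y \in H^\sigma$, so $H^\sigma \subseteq (H^\sigma)^\sigma = H$. In particular $H^\sigma$ is isotropic, and invoking the structural fact that every closed isotropic subgroup of a phase space is contained in a Lagrangian one, I would pick a Lagrangian $L$ with $H^\sigma \subseteq L$; passing to $\sigma$-duals gives $L = L^\sigma \subseteq (H^\sigma)^\sigma = H$. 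By the forward direction $\mathcal L(\mathcal H)_L$ is MASA, hence a translation-invariant abelian von Neumann subalgebra containing $\mathcal L(\mathcal H)_H$, and maximality forces $\mathcal L(\mathcal H)_H = \mathcal L(\mathcal H)_L$, which upon taking commutants yields $\{U_h : h \in H\}'' = \{U_l : l \in L\}''$. Finally, for any $h \in H$, $U_h$ lies in the abelian algebra $\{U_l : l \in L\}''$ and therefore commutes with every $U_l$; this forces $\sigma(h,l) = 1$ for all $l \in L$, i.e.\ $h \in L^\sigma = L$. Combined with $L \subseteq H$ this gives $H = L$, which is Lagrangian.

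The main technical hurdle is the structural lemma invoked in the reverse direction, namely that every closed isotropic subgroup of an abelian phase space is contained in a Lagrangian one. This is classical for finite-dimensional symplectic vector spaces and reduces to a Gram–Schmidt-type argument, but in the general LCA setting it requires a Zorn-type argument together with the non-degeneracy of $\sigma$ (through the isomorphism $\Xi \cong \widehat\Xi$) to guarantee that a maximal isotropic closed subgroup is automatically self-dual under $\sigma$. The square-integrability hypothesis enters only indirectly, through the existence of a faithful $L^2$-model in which the Schrödinger realization and the Fourier-analytic identification with $L^\infty(\widehat H)$ can be carried out.
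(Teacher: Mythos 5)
The paper does not actually prove this theorem in-text: it imports it from the quantum spectral synthesis machinery of \cite{Fulsche_Rodriguez2023} (extended to general phase spaces via \cite{Fulsche_Galke2023}), whose key output is the identification $\mathcal L(\mathcal H)_H = \overline{\operatorname{span}}^{w^*}\{U_x : x \in H^\sigma\}$ --- and that identification is precisely where square-integrability is used. Your route is genuinely different: you run both directions through the Stone--von Neumann theorem and an induced ``Schr\"odinger'' model on $L^2(\widehat H) \cong L^2(\Xi/H)$, which is in fact the technique the paper deploys in Section 2 for Theorem \ref{thm:main}, not for this statement. Your logical skeleton is sound: translation invariance of $\mathcal L(\mathcal H)_H$ from the commutation relation, MASA (hence a fortiori maximal translation-invariant abelian) in the forward direction, and in the converse the chain ``abelianness forces $H^\sigma$ isotropic, so $H^\sigma \subseteq H^{\sigma\sigma} = H$; Zorn produces a Lagrangian $L$ with $H^\sigma \subseteq L = L^\sigma \subseteq H$; maximality plus double commutants squeeze $H = L$'' is correct (the Zorn step works because a maximal closed isotropic subgroup must equal its own $\sigma$-orthogonal, by adjoining any $x \in L^\sigma\setminus L$). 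What each approach buys: yours dispenses with square-integrability entirely --- consistent with the paper's remark that commutativity for Lagrangian $H$ does not require it --- whereas the spectral-synthesis route needs it but avoids representation models altogether. The price you pay is that you inherit the technical debts of the induced-representation construction, which you gloss over: (i) irreducibility of the model on $L^2(\widehat H)$ is asserted but not proved, and it is needed before Stone--von Neumann can be invoked (the paper spends the end of Section 2 proving exactly this); (ii) the construction requires a Borel cross-section of $\Xi \to \Xi/L$, which the paper only guarantees for first-countable $L$, and the Lagrangian $L$ produced by Zorn carries no such guarantee; and (iii) your closing claim that square-integrability enters ``indirectly'' is misleading --- it does not enter your argument at all, which is fine, but you should say so plainly rather than suggest it is secretly used.
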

As mentioned before, the Stone-von Neumann theorem allows for a description of the Gelfand theory of $\mathcal L(\mathcal H)_H$ by a method more general (and, depending of the taste of the reader, also more elegant) than the one utilized in \cite{Fulsche_Rodriguez2023}. The main goal of this note is describing this and obtaining the following result:
\begin{thm}\label{thm:main}
    Let $(\Xi, m)$ be an abelian phase space and $H \subset \Xi$ a Lagrangian subgroup, which is first-countable. Further, let $(\mathcal H, U)$ be the unique irreducible $m$-representation of $\Xi$. Then, $\mathcal L(\mathcal H)_H$ is a commutative von Neumann algebra with $\mathcal L(\mathcal H)_H \cong L^\infty(\Xi / H)$. 
\end{thm}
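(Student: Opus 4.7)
The plan is to realize the abstract $m$-representation $(\mathcal H, U)$ concretely on $L^2(\Xi/H)$ so that the operators $U_h$, $h \in H$, become multiplication by characters of $\Xi/H$; the identification $\mathcal L(\mathcal H)_H \cong L^\infty(\Xi/H)$ then reduces to the fact that $L^\infty(\Xi/H)$ acts as its own commutant on $L^2(\Xi/H)$. The first input is the Pontryagin duality supplied by the Lagrangian condition: since $H = H^\sigma$, the character $\sigma(\cdot, h)$ is trivial on $H$ for every $h \in H$ and hence descends to a continuous character of $\Xi/H$. The phase-space isomorphism $\Xi \to \widehat \Xi$, $x \mapsto \sigma(\cdot, x)$, identifies $H = H^\sigma$ with the annihilator of $H$ in $\widehat \Xi$, which by duality equals $\widehat{\Xi/H}$. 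Thus $h \mapsto \sigma(\cdot, h)$ is a topological group isomorphism $H \xrightarrow{\sim} \widehat{\Xi/H}$, and in particular \emph{every} continuous character of $\Xi/H$ arises this way.

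Next I would construct a canonical $m$-representation $V$ on $L^2(\Xi/H)$. After fixing a Borel section $s: \Xi/H \to \Xi$, set
\[
    (V_x f)(\dot y) = c(x, \dot y)\, f(\dot y - \dot x)
\]
for a $\mathbb T$-valued phase $c$ built from $m$ and $s$ and engineered to enforce $V_x V_y = m(x,y) V_{x+y}$. A direct computation then gives, for $h \in H$, that $V_h f = \sigma(\cdot, h)\, f$, i.e.\ multiplication by the character $\sigma(\cdot, h) \in \widehat{\Xi/H}$; irreducibility of $V$ can be verified in the spirit of Mackey's analysis of induced representations (or a short direct computation using the duality above). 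Invoking the Stone--von Neumann theorem quoted in the excerpt, there is a unitary $W : \mathcal H \to L^2(\Xi/H)$ with $W U_x W^* = V_x$, and under $W$ the von Neumann algebra $\mathcal L(\mathcal H)_H$ is mapped onto the commutant of $\{V_h : h \in H\} = \{M_\chi : \chi \in \widehat{\Xi/H}\}$ in $\mathcal L(L^2(\Xi/H))$. By the standard Fourier-theoretic description of von Neumann algebras on $L^2$ of an lca group, the vNA generated by multiplication by all continuous characters of $\Xi/H$ is precisely $L^\infty(\Xi/H)$ (acting by multiplication), a maximal abelian subalgebra of $\mathcal L(L^2(\Xi/H))$ that coincides with its own commutant. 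This gives both the commutativity of $\mathcal L(\mathcal H)_H$ and the isomorphism $\mathcal L(\mathcal H)_H \cong L^\infty(\Xi/H)$.

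The main obstacle I expect is the construction of $V$ itself: choosing the cocycle $c$ so that $V$ is a bona fide $m$-representation, independent of the choice of section up to unitary equivalence, and verifying irreducibility in the general (possibly non-separable) lca setting. This is precisely where the first-countability hypothesis plays its role, by rendering $H$ and $\Xi/H$ metrizable and ensuring that Borel sections, quotient Haar measure, and Plancherel duality on $\Xi/H$ are available without measure-theoretic pathology. Once $V$ is in place, the commutant computation is pure Pontryagin duality.
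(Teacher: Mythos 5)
Your proposal is correct and follows essentially the same route as the paper: realize the representation via a Borel section so that the $U_h$, $h\in H$, become multiplication by characters of $\Xi/H$ (using $H=H^\sigma\cong\widehat{\Xi/H}$), identify the commutant of these characters with the multiplication masa $L^\infty(\Xi/H)$, verify irreducibility, and invoke Stone--von Neumann. The only difference is presentational: you build the cocycle directly on $L^2(\Xi/H)$, whereas the paper defines a section-free representation on the quasi-periodic space $L^2(\Xi \sslash H)$ and conjugates to $L^2(\Xi/H)$ by the unitary $A_\Phi$ --- an alternative the paper itself explicitly acknowledges.
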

Let us emphasize that, in this result, we do not need to assume that the representation $(\mathcal H, U)$ is square-integrable. Hence, even commutativity of $\mathcal L(\mathcal H)_H$ in the above result does, in general, not follow from Theorem \ref{thm:Fulsche_Rodriguez}. We want to note that the assumption of first-countability of $H$ is of technical nature. We will discuss this later at an appropriate point.

Note that the algebra
\begin{align*}
    \mathcal C_1(\mathcal H) := \{ A \in \mathcal L(\mathcal H): ~\| \alpha_z(A) - A\| \to 0 \text{ as } z \to 0\},
\end{align*}
plays an important role in quantum harmonic analysis (see, e.g., \cite{Werner1984, Fulsche2020, Fulsche_Galke2023, Fulsche_Luef_werner2024}).  In the setting of $\Xi = \mathbb R^{2n}$, $\mathcal C_1(\mathcal H)$ agrees with the $C^\ast$-algebra generated by all Toeplitz operators on the Segal-Bargmann space with bounded symbols \cite{Fulsche2020}. Hence, it is natural to ask about the Gelfand theory of $\mathcal C_1(\mathcal H)_H := \mathcal C_1(\mathcal H) \cap \mathcal L(\mathcal H)_H$. This is described by the following result, which is an immediate consequence of the above result:
\begin{cor}\label{corollary}
    Let $(\Xi, m)$ be an abelian phase space and $H \subset \Xi$ a first-countable Lagrangian subgroup. Then, $\mathcal C_1(\mathcal H)_H \cong \operatorname{BUC}(\Xi / H)$.
\end{cor}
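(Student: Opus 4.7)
The plan is to deduce the corollary directly from Theorem \ref{thm:main} by tracking the translation action $\alpha_z = U_z \cdot U_z^\ast$ through the isomorphism $\Phi: \mathcal{L}(\mathcal{H})_H \to L^\infty(\Xi/H)$. First I would verify that $\mathcal{L}(\mathcal{H})_H$ is $\alpha$-invariant: for $h \in H$, the relation $U_h U_z = \sigma(h,z) U_z U_h$ gives a scalar which cancels in $U_h \alpha_z(A) U_h^{-1}$, so if $A$ commutes with all $U_h$ then so does $\alpha_z(A)$. Hence the restriction $\alpha_z \restriction \mathcal{L}(\mathcal{H})_H$ makes sense and is a strongly continuous action by $\ast$-automorphisms.

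The crucial step is to check that $\Phi$ intertwines $\alpha_z$ with the natural translation $\tau_{z+H}$ on $L^\infty(\Xi/H)$, where $\tau_{z+H} f(x+H) = f(x - z + H)$ (or $+z$, depending on convention). This is to be read off from the construction of $\Phi$ in the proof of Theorem \ref{thm:main}: by Stone--von Neumann, $(\mathcal{H},U)$ is realized on $L^2(\Xi/H)$ in such a way that $\mathcal{L}(\mathcal{H})_H$ becomes the algebra of multiplication operators $M_f$ with $f \in L^\infty(\Xi/H)$, and $U_z$ acts as a combination of a character and a translation by $z + H$. A short computation then shows $U_z M_f U_z^\ast = M_{\tau_{z+H} f}$, so $\Phi \circ \alpha_z = \tau_{z+H} \circ \Phi$.

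Once this intertwining property is established, the corollary is immediate: by definition,
\begin{align*}
    \mathcal{C}_1(\mathcal{H})_H = \{ A \in \mathcal{L}(\mathcal{H})_H : \|\alpha_z(A) - A\| \to 0 \text{ as } z \to 0\},
\end{align*}
and applying $\Phi$ transports this to
\begin{align*}
    \{ f \in L^\infty(\Xi/H) : \|\tau_{z+H} f - f\|_\infty \to 0 \text{ as } z \to 0\}.
\end{align*}
The translation action of $\Xi$ on $L^\infty(\Xi/H)$ factors through $\Xi/H$, and the standard characterization of bounded uniformly continuous functions on a locally compact abelian group identifies the norm-continuous vectors for translation on $L^\infty$ with $\operatorname{BUC}$. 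Thus $\Phi$ restricts to the desired isomorphism $\mathcal{C}_1(\mathcal{H})_H \cong \operatorname{BUC}(\Xi/H)$.

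The main obstacle is not the general scheme, which is a soft functional analytic observation, but rather the verification that $\Phi$ genuinely intertwines $\alpha_z$ with translation; this depends on the concrete Schrödinger-type model of $(\mathcal{H},U)$ on $L^2(\Xi/H)$ used in Theorem \ref{thm:main}. The first-countability assumption on $H$ enters only through Theorem \ref{thm:main} itself, so nothing extra is needed here.
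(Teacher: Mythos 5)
Your proposal is correct and follows essentially the same route as the paper: the authors likewise reduce to the multiplication-operator model from Theorem \ref{thm:main}, verify by the cocycle identity that $\alpha_x(M_f) = M_{f(\cdot + x)}$, and then use $\|\alpha_x(M_f) - M_f\| = \|f(\cdot+x) - f\|_\infty$ to identify the norm-continuous elements with $\operatorname{BUC}(\Xi/H)$. The only cosmetic difference is that the paper carries out the intertwining computation on the space $L^2(\Xi \sslash H)$ rather than on $L^2(\Xi/H)$, which changes nothing of substance.
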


Besides proving the above theorem and its corollary, which will be done in Section \ref{sec:2} of this note, we will also provide an extension of the theorem to the operator-valued case (Section \ref{sec:3}). Finally, in Section \ref{sec:examples}, we will discuss several examples, where we apply our results.

\section{The main theorem}\label{sec:2}

We note that when $H$ is a Lagrangian subgroup of $\Xi$, then $\sigma|_{H \times H} = 1$. This is of course equivalent to $m(x,y) = m(y,x)$ for all $x, y \in H$. In this case, one can show that $m(x,y) = \frac{\alpha(x+y)}{\alpha(x)\alpha(y)}$ for all $x, y \in H$ and some Borel map $\alpha: H \to \mathbb T$ with $\alpha(0) = 1$ (cf.\ \cite[p.~308]{Baggett_Kleppner1973}).

 For describing the Gelfand theory of $\mathcal L(\mathcal H)_H$ we will consider a certain canonical representation of $(\Xi, m)$ affiliated with the Lagrangian subgroup $H$. Mumford describes this representation in \cite{Mumford1991}, but does not give all the technical details. We will provide enough details to keep things self-contained.

Therefore, we fix in the following a phase space $(\Xi, m)$ and a Lagrangian subgroup $H \subset \Xi$.
\begin{defn}
    The space $L^2(\Xi \sslash H)$ is defined to be the space consisting of all (equivalence classes of) measurable functions $f: \Xi \to \mathbb C$ satisfying
    \begin{align*}
        f(x+h) = \overline{\alpha(h) m(h,x)} f(x), \quad x \in \Xi, h \in H,\\
        \int_{\Xi/H} |f(x+H)|^2~d(x+H) < \infty.
    \end{align*}
    Note that the first condition implies that $|f(x+h)| = |f(x)|$ such that the second condition is well-defined.
\end{defn}
Since for $f, g \in L^2(\Xi \sslash H)$, $f \cdot \overline{g}$ is also $H$-invariant, the standard $L^2(\Xi / H)$-inner product defines an inner product on $L^2(\Xi \sslash H)$.

\begin{defn}
    On $L^2(\Xi \sslash H)$ define
    \begin{align*}
        U_x f(t) = m(t,x) f(t+x).
    \end{align*}
\end{defn}
\begin{lem}
 For each $x \in \Xi$, $U_x$ is a unitary operator on $L^2(\Xi \sslash H)$, satisfying $U_x^\ast = \overline{m(x,-x)}U_{-x}$. Further, for $x, y \in \Xi$ we have $U_x U_y = m(x,y) U_{x+y}$. 
\end{lem}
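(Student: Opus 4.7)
The plan is to reduce the three claims to direct computations that all boil down to the cocycle identity satisfied by $m$, namely $m(x+y,z) m(x,y) = m(x,y+z) m(y,z)$. Before doing anything else, I need to check that $U_x$ actually maps $L^2(\Xi \sslash H)$ into itself. For $h \in H$ and $f \in L^2(\Xi\sslash H)$, compute
\begin{align*}
    (U_x f)(t+h) = m(t+h,x)\, f((t+x)+h) = m(t+h,x)\,\overline{\alpha(h)\, m(h,t+x)}\, f(t+x),
\end{align*}
using the covariance of $f$. Comparing with the desired identity $(U_xf)(t+h) = \overline{\alpha(h)\, m(h,t)}\,(U_xf)(t)$, the proof reduces to verifying $m(h+t,x)\, m(h,t) = m(h,t+x)\, m(t,x)$, which is exactly the cocycle identity with $(x,y,z) = (h,t,x)$. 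Since $|U_x f(t)| = |f(t+x)|$, translation-invariance of the Haar measure on $\Xi/H$ gives $\|U_xf\| = \|f\|$, so $U_x$ is a well-defined isometry.

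Next I would handle the composition law. Unwinding,
\begin{align*}
    (U_x U_y f)(t) = m(t,x)\,(U_y f)(t+x) = m(t,x)\, m(t+x,y)\, f(t+x+y),
\end{align*}
while $m(x,y)\,(U_{x+y}f)(t) = m(x,y)\, m(t,x+y)\, f(t+x+y)$. The two agree precisely by the cocycle identity applied with $(x,y,z) = (t,x,y)$, giving $U_xU_y = m(x,y)\, U_{x+y}$.

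Finally, to obtain unitarity and the adjoint formula, observe that $U_0 f(t) = m(t,0) f(t) = f(t)$, so $U_0 = I$. Setting $y = -x$ in the composition rule yields $U_x U_{-x} = m(x,-x)\, I$, and symmetrically $U_{-x} U_x = m(-x,x)\, I$; in particular $U_x$ is surjective, and being a surjective isometry it is unitary. Its inverse, and hence its adjoint, is $U_x^{-1} = \overline{m(x,-x)}\, U_{-x}$. The only potential obstacle is a careful bookkeeping check that the measurability and covariance conditions are preserved under $U_x$, but once the cocycle identity is applied in the right grouping this is entirely mechanical; no square-integrability of $(\mathcal H, U)$ or further structure on $H$ is needed for this lemma.
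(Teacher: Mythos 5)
Your proof is correct and is exactly the direct verification the paper has in mind (its own proof just states that the checks are straightforward): well-definedness, the composition law, and the adjoint formula all reduce to the cocycle identity with the appropriate choice of arguments, and you apply it correctly in each case. Nothing is missing.
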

\begin{proof}
Verification of these facts is straightforward.
\end{proof}
What is indeed not trivial to check are the following two points: The representation acts irreducibly on $L^2(\Xi \sslash H)$. Even worse, given that it is not straightforward to construct nonzero elements of $L^2(\Xi \sslash H)$ explicitly, it is not even clear if $L^2(\Xi \sslash H) \neq \{ 0\}$. Both points will be clarified later.

Let $f \in L^\infty(\Xi / H)$. Then, multiplication by $f$, i.e., the operator $M_f$ defined by
\begin{align*}
    M_f g(x) = f(x+H) g(x), \quad g \in L^2(\Xi \sslash H),
\end{align*}
leaves $L^2(\Xi \sslash H)$ invariant as is not hard to verify. Further, one immediately sees that $\| M_f\| \leq \| f\|_\infty$. Indeed, we even have equality, which is again not straightforward to see. 

Note that for $h \in H$:
\begin{align*}
    U_h M_f g(t) = m(t,h) f(t+h) g(t+h) = f(t) m(t,h) g(t+h) = M_f U_h g(t),
\end{align*}
i.e., $U_h M_f = M_f U_h$. Indeed, the converse to this statement is also true, which is a key step in proving our main result:
\begin{prop}
    Let $A \in \mathcal L(L^2(\Xi \sslash H))_H$. Then, there exists $f \in L^\infty(\Xi / H)$ such that $A = M_f$.
\end{prop}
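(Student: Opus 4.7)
The plan is to show that for each $h \in H$ the operator $U_h$ on $L^2(\Xi \sslash H)$ is nothing but multiplication by a continuous character of $\Xi/H$ (up to a unimodular scalar), and then to use that on $L^2(\Xi/H)$ the commutant of the multiplication operators by all characters is the MASA $M_{L^\infty(\Xi/H)}$.

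For the first step, a direct use of the defining covariance gives, for $h \in H$ and $f \in L^2(\Xi \sslash H)$,
\[
(U_h f)(t) = m(t,h)\, f(t+h) = \overline{\alpha(h)}\,\frac{m(t,h)}{m(h,t)}\,f(t) = \overline{\alpha(h)}\,\sigma(t,h)\,f(t).
\]
Since $H = H^\sigma$, the function $\sigma(\cdot,h)$ is $H$-invariant and descends to a continuous character $\chi_h \in \widehat{\Xi/H}$, so $U_h = \overline{\alpha(h)}\, M_{\chi_h}$. Combining the Lagrangian property with Pontryagin duality, the map $h \mapsto \chi_h$ is a topological isomorphism $H \to \widehat{\Xi/H}$: any character of $\Xi/H$ lifts to a character of $\Xi$ trivial on $H$, and by the isomorphism $\Xi \cong \widehat{\Xi}$ induced by $\sigma$ this lift is of the form $\sigma(\cdot, x)$ for a unique $x$, which triviality on $H$ forces to lie in $H^\sigma = H$. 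The hypothesis $AU_h = U_hA$ for all $h \in H$ is therefore equivalent to $A$ commuting with $M_\chi$ for every $\chi \in \widehat{\Xi/H}$.

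Next, I would transport the problem to the familiar space $L^2(\Xi/H)$. Using a Borel cross-section $s:\Xi/H \to \Xi$ of the quotient map (whose existence is where the first-countability of $H$ enters, ensuring sufficient separability), the assignment $Vf := f\circ s$ should give a unitary $L^2(\Xi \sslash H) \to L^2(\Xi/H)$ intertwining multiplication by $L^\infty(\Xi/H)$ on both sides, and en passant proving that $L^2(\Xi \sslash H) \neq \{0\}$ and that $\|M_\phi\| = \|\phi\|_\infty$. On $L^2(\Xi/H)$, the representation $\widehat{\Xi/H} \ni \chi \mapsto M_\chi$ is, by SNAG/the spectral theorem, integrated from the projection-valued measure $E(S) = M_{1_S}$, whose generated von Neumann algebra is the MASA $M_{L^\infty(\Xi/H)}$. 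Hence the commutant of $\{M_\chi : \chi \in \widehat{\Xi/H}\}$ equals $M_{L^\infty(\Xi/H)}$, and so $VAV^\ast = M_f$ for some $f \in L^\infty(\Xi/H)$; transporting back yields $A = M_f$ on $L^2(\Xi \sslash H)$.

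The main technical obstacle is the construction of $V$: choosing a well-behaved Borel section and setting up the measure theory on $\Xi/H$ (via Weil's formula) so that $V$ is isometric with the correct intertwining property. Once this identification of $L^2(\Xi \sslash H)$ with $L^2(\Xi/H)$ is in place, the remainder of the argument is a clean application of Pontryagin duality and the standard MASA property of the multiplication algebra on an lca group.
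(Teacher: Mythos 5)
Your proposal is correct and follows essentially the same route as the paper: a Borel cross-section (available by first-countability of $H$, via Feldman--Greenleaf) yields a unitary identification of $L^2(\Xi \sslash H)$ with $L^2(\Xi/H)$ intertwining the multiplication operators, Pontryagin duality identifies $\widehat{\Xi/H}$ with $H$ via $h \mapsto \sigma(\cdot,h)$, and the commutant of the character multiplications is the maximal abelian algebra $M_{L^\infty(\Xi/H)}$. The one place you are slicker than the paper is the observation that the defining covariance of $L^2(\Xi\sslash H)$ already forces $U_h = \overline{\alpha(h)}\,M_{\sigma(\cdot,h)}$ \emph{directly on} $L^2(\Xi\sslash H)$, whereas the paper only extracts this after conjugating by $A_\Phi$ and invoking the functional equation of its Lemma \ref{lemma:multiplier}; your shortcut removes the cross-section from that step entirely (it is still needed, as you note, to see $L^2(\Xi\sslash H)\neq\{0\}$ and to transport to $L^2(\Xi/H)$ where the MASA property is standard).
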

The latter statement shows that, as von Neumann algebras, $\mathcal L(L^2(\Xi \sslash H))_H \cong L^\infty(\Xi / H)$, which then also determines the Gelfand spectrum of $\mathcal L(L^2(\Xi \sslash H))_H$ as that of the standard von Neumann algebra $L^\infty(\Xi / H)$.

The main result is based on the following lemma:
\begin{lem}\label{lemma:multiplier}
    Assume that there exists a Borel-measurable cross section $\gamma: \Xi/H \to \Xi$ (i.e., $\pi \circ \gamma = Id$ on $\Xi/H$, where $\pi: \Xi \to \Xi / H$ is the quotient map). Then, the map
    \begin{align*}
        \Phi: \Xi \to \mathbb T, \quad \Phi(x) := \overline{\alpha(x-\gamma(x+H)) m(x-\gamma(x+H), \gamma(x+H))},
    \end{align*}
    is measurable and satisfies
    \begin{align*}
        \Phi(x+h) = \frac{\Phi(x)}{\alpha(h)m(h,x)}, \quad x \in \Xi, ~h \in H.
    \end{align*}
\end{lem}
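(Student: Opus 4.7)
The plan is to verify the two assertions --- measurability and the covariance relation --- by direct computation, with essentially all the work happening in the latter. First I would observe that for every $x \in \Xi$ the difference $x - \gamma(x+H)$ lies in $H$ (since $\gamma$ is a section of the quotient map $\pi$), so the expressions $\alpha(x-\gamma(x+H))$ and $m(x-\gamma(x+H), \gamma(x+H))$ are well defined. Measurability of $\Phi$ is then immediate: $\pi$ is continuous and $\gamma$ is Borel by hypothesis, so $x \mapsto x - \gamma(x+H)$ is Borel, and composition with the measurable maps $\alpha$ and $m$ yields a Borel function.

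For the covariance, I would fix $x \in \Xi$ and $h \in H$ and set $w := \gamma(x+H)$, $k := x - w \in H$. Since $h \in H$, one has $(x+h) + H = x + H$, hence $\gamma((x+h)+H) = w$ and $(x+h) - \gamma((x+h)+H) = k+h$. Because all quantities have modulus one, the claimed identity $\Phi(x+h) = \Phi(x)/(\alpha(h)m(h,x))$ is equivalent, after conjugation, to
\[
\frac{\alpha(k+h)\, m(k+h, w)}{\alpha(k)\, m(k, w)} = \alpha(h)\, m(h, x).
\]

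I would then attack the left-hand side in two pieces. For the $\alpha$-factor the coboundary relation $\alpha(k+h) = \alpha(k)\alpha(h)\, m(k,h)$ (valid for $k,h \in H$ by the $\alpha$ introduced at the start of Section \ref{sec:2}) gives $\alpha(k+h)/\alpha(k) = \alpha(h)\, m(k,h)$. For the $m$-factor I would plug the triple $(h, k, w)$ into the multiplier cocycle identity $m(a+b,c)\, m(a,b) = m(a, b+c)\, m(b,c)$, obtaining
\[
m(h+k, w)\, m(h, k) = m(h, k+w)\, m(k, w).
\]
Since $k + w = x$ by construction and $h+k = k+h$, this rearranges to $m(k+h, w)/m(k, w) = m(h,x)/m(h,k)$. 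Multiplying the two quotients produces exactly $\alpha(h)\, m(h,x) \cdot m(k,h)/m(h,k)$. The stray factor $m(k,h)/m(h,k)$ is precisely $\sigma(k,h)$, which equals $1$ because $k, h \in H$ and $H$ is Lagrangian.

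The only real obstacle I anticipate is bookkeeping: one must choose the ordering of the cocycle arguments so that the entry $k+w$ collapses to $x$, and then recognize the leftover ratio as $\sigma(k,h)$, which the Lagrangian hypothesis annihilates. No analytic or topological subtlety enters beyond the Borel assumptions already in place on $\gamma$, $\alpha$, and $m$.
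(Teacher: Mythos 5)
Your proof is correct and follows essentially the same route as the paper's: a direct verification combining the coboundary relation $m(k,h)=\alpha(k+h)/(\alpha(k)\alpha(h))$ on $H\times H$ with the multiplier cocycle identity. Your choice of arguments $(h,k,w)$ with $k+w=x$ makes a single application of the cocycle identity suffice (the paper uses it three times), at the small cost of the leftover factor $\sigma(k,h)$, which the Lagrangian hypothesis kills; the two computations are otherwise the same.
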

\begin{proof}
    Measurability of $\Phi$ follows from measurability of $\gamma, \alpha$ and $m$. We only need to verify that the functional equation is satisfied. For this, note that $\gamma(x+h+H) = \gamma(x+H)$. We therefore obtain the following equivalences of statements:
    \begin{align*}
        \Phi(x+h) &= \frac{\Phi(x)}{\alpha(h)m(h,x)}\\
        \Longleftrightarrow \frac{\alpha(x-\gamma(x+H)) \alpha(h) }{\alpha(x+h-\gamma(x+H))} &= \frac{m(x+h - \gamma(x+H), \gamma(x+H))}{m(x-\gamma(x+H), \gamma(x+H)) m(h,x)}.
        \intertext{Using that $\frac{\alpha(h)\alpha(x-\gamma(x+H))}{\alpha(x+h - \gamma(x+H))} = \overline{m(h, x-\gamma(x+H))}$ we obtain:}
        \Longleftrightarrow \frac{1}{m(h, x-\gamma(x+H))} &= \frac{m(x+h - \gamma(x+H), \gamma(x+H))}{m(x-\gamma(x+H), \gamma(x+H)) m(h,x)}.
        \end{align*}
        Using the cocycle identity to write
        \begin{align*}
        m(x-\gamma(x+H), \gamma(x+H)) = \frac{m(x, 0)m(-\gamma(x+H), \gamma(x+H))}{m(x, -\gamma(x+H))},
        \end{align*}
        as well as $m(x+h-\gamma(x+H), \gamma(x+H)) = \frac{m(x+h, 0)m(-\gamma(x+H), \gamma(x+H))}{m(x+h, -\gamma(x+H))}$, together with $m(0,y) = 1 = m(y,0)$ for every $y \in \Xi$, we obtain:
        \begin{align*}
        \Phi(x+h) & = \frac{\Phi(x)}{\alpha(h)m(h,x)}\\
        \Longleftrightarrow \frac{1}{m(h,x-\gamma(x+H)} & = \frac{m(x, -\gamma(x+H))}{m(x+h, -\gamma(x+H)m(h,x)}.
    \end{align*}
    Now, applying the identity $m(x+h, -\gamma(x+H)) = \frac{m(h,x-\gamma(x+H)) m(x, -\gamma(x+H))}{m(h,x)}$ shows that this last equality is satisfied, which finishes the proof.
\end{proof}

By the theorem of Birkhoff and Kakutani, the subgroup $H \subset \Xi$ is metrizable if and only if it is first-countable. In this case, \cite[Theorem 1]{Feldman_Greenleaf_1968} ensures that the quotient map $\pi: \Xi \to \Xi/H$ admits a Borel measurable cross section, i.e., there exists a Borel-measurable map $\gamma: \Xi / H \to \Xi$ such that $\pi \circ \gamma = Id$ on $\Xi / H$. With this $\gamma$ fixed and $\Phi$ as in Lemma \ref{lemma:multiplier}, we can now continue.
\begin{rem}
    The existence of cross sections of the canonical quotient map can be ensured also in the general case \cite{Kehlet1984, Kupka1983}, but only with weaker measurability properties. We think that the present note is not the right place for elaborated discussions on measurability properties of cross sections of the quotient map. Nevertheless, with ideas such as those presented in \cite{Kehlet1984, Kupka1983}, it seems reasonable to expect that the restriction of $H$ being first-countable can be overcome.
\end{rem}

Having established the existence of a map from Lemma \ref{lemma:multiplier}, we can now consider the operator $A_\Phi: L^2(\Xi/H) \to L^2(\Xi \sslash H)$, which is formally a multiplication operator, defined as follows:
\begin{align*}
    A_\Phi f(x) = \Phi(x) f(x+H).
\end{align*}
Then, we easily obtain that $A_\Phi$ is unitary, which we fix as a lemma:
\begin{lem}
    Let $H \subset \Xi$ be a first-countable Lagrangian subgroup and $\gamma: \Xi/H \to \Xi$ be a Borel measurable section of the quotient map $\pi: \Xi \to \Xi/H$. Then, the operator $A_\Phi: L^2(\Xi/H) \to L^2(\Xi \sslash H)$ defined above is a unitary operator. For $f \in L^2(\Xi \sslash H)$, the function $\overline{\Phi} \cdot f$ is $H$-invariant and $A_\Phi^\ast: L^2(\Xi \sslash H) \to L^2(\Xi / H)$ acts as $A_\Phi^\ast f(x+H) = [\overline{\Phi} \cdot f](x + H)$.
\end{lem}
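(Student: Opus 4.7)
The plan is to verify the three claims in turn: that $A_\Phi$ maps into $L^2(\Xi \sslash H)$ isometrically, that multiplication by $\overline{\Phi}$ is an inverse (so $A_\Phi$ is surjective, hence unitary), and that its adjoint takes the stated form. The whole argument rests on the transformation rule for $\Phi$ given in Lemma \ref{lemma:multiplier} together with $|\Phi|\equiv 1$.

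First, I would check that $A_\Phi f$ satisfies the twisted $H$-equivariance built into the definition of $L^2(\Xi \sslash H)$. Fix $f \in L^2(\Xi/H)$; since $\gamma(x+h+H) = \gamma(x+H)$, the function $x \mapsto f(x+H)$ is genuinely $H$-invariant. For $x \in \Xi$ and $h \in H$,
\begin{align*}
    (A_\Phi f)(x+h) = \Phi(x+h)\, f(x+h+H) = \overline{\alpha(h)m(h,x)}\,\Phi(x)\, f(x+H) = \overline{\alpha(h)m(h,x)}\,(A_\Phi f)(x),
\end{align*}
where in the middle step I use Lemma \ref{lemma:multiplier} together with $\alpha,m \in \mathbb T$. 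This is exactly the defining covariance of $L^2(\Xi \sslash H)$. Since $|\Phi|\equiv 1$, we also get $|(A_\Phi f)(x)| = |f(x+H)|$, so
\begin{align*}
    \|A_\Phi f\|_{L^2(\Xi \sslash H)}^2 = \int_{\Xi/H} |f(x+H)|^2\, d(x+H) = \|f\|_{L^2(\Xi/H)}^2,
\end{align*}
i.e., $A_\Phi$ is a well-defined isometry.

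Next, I would verify that $\overline{\Phi}\cdot f$ is genuinely $H$-invariant for $f \in L^2(\Xi \sslash H)$, so that it descends to a function on $\Xi/H$. Combining the covariance of $f$ with Lemma \ref{lemma:multiplier}:
\begin{align*}
    \overline{\Phi(x+h)}\, f(x+h) = \alpha(h)m(h,x)\,\overline{\Phi(x)} \cdot \overline{\alpha(h)m(h,x)}\, f(x) = \overline{\Phi(x)}\,f(x),
\end{align*}
proving invariance. Writing $B: L^2(\Xi \sslash H) \to L^2(\Xi/H)$, $(Bf)(x+H) = [\overline{\Phi}\cdot f](x+H)$, the same $|\Phi|\equiv 1$ argument shows $B$ is an isometry, and $BA_\Phi = \mathrm{Id}$, $A_\Phi B = \mathrm{Id}$ are immediate from the definitions. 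Hence $A_\Phi$ is unitary with $A_\Phi^{-1} = B$.

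Finally, the formula $A_\Phi^\ast = A_\Phi^{-1} = B$ follows from the inner product computation
\begin{align*}
    \langle A_\Phi f, g \rangle_{L^2(\Xi \sslash H)} = \int_{\Xi/H} \Phi(x) f(x+H)\, \overline{g(x)}\, d(x+H) = \langle f, Bg \rangle_{L^2(\Xi/H)},
\end{align*}
valid because $\Phi\cdot f \cdot \overline{g}$ is $H$-invariant (product of $H$-invariant $f(x+H)$ with the invariant $\Phi\overline{g}$ from the previous step). The only genuinely nontrivial ingredient is the transformation rule for $\Phi$; everything else is bookkeeping built around $|\Phi|=1$. The main subtlety to be careful about is consistently applying the identity $1/(\alpha(h)m(h,x)) = \overline{\alpha(h)m(h,x)}$, which is what makes the two covariances dual to one another.
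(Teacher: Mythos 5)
Your proof is correct and is exactly the verification the paper has in mind: the paper simply states that ``the verification of this lemma is again straightforward'' and omits it, and your argument (covariance of $A_\Phi f$ from the transformation rule of Lemma \ref{lemma:multiplier}, isometry from $|\Phi|\equiv 1$, $H$-invariance of $\overline{\Phi}\cdot f$, and the inverse/adjoint identification) supplies precisely the missing bookkeeping. No gaps.
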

The verification of this lemma is again straightforward. As an immediate consequence, we obtain the following facts:
\begin{prop}
    Let $(\Xi, m)$ be an abelian phase space and $H \subset \Xi$ a first-countable Lagrangian subgroup.
    \begin{enumerate}
        \item $L^2(\Xi \sslash H) \neq \{ 0\}$.
        \item If $f \in L^\infty(\Xi / H)$, then the multiplication operator $M_f: L^2(\Xi \sslash H) \to L^2(\Xi \sslash H)$ has operator norm $\| M_f \| = \| f\|_\infty$. 
    \end{enumerate}
\end{prop}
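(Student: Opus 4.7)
The plan is to leverage the unitary $A_\Phi : L^2(\Xi/H) \to L^2(\Xi \sslash H)$ from the preceding lemma, which does all the heavy lifting; once we have it, both statements reduce to trivialities about the well-understood space $L^2(\Xi/H)$.

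For part (1), I would simply observe that $L^2(\Xi/H)$ is nonzero (the quotient $\Xi/H$ carries a nontrivial Haar measure, so even its indicator functions of compact sets of positive measure give nonzero elements). Since $A_\Phi$ is unitary, applying it to any nonzero $g \in L^2(\Xi/H)$ produces a nonzero element $A_\Phi g \in L^2(\Xi \sslash H)$. In particular, the formula $A_\Phi g(x) = \Phi(x) g(x+H)$ gives an explicit supply of nonzero elements of $L^2(\Xi \sslash H)$, which was the missing piece mentioned in the exposition preceding the lemma.

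For part (2), I would conjugate $M_f$ (on $L^2(\Xi \sslash H)$) by $A_\Phi$ and show it becomes the ordinary multiplication operator by $f$ on $L^2(\Xi/H)$. Concretely, for $g \in L^2(\Xi/H)$ and $x \in \Xi$, the preceding lemma's description of $A_\Phi^\ast$ gives
\begin{align*}
    A_\Phi^\ast M_f A_\Phi g(x+H) &= \bigl[\overline{\Phi}\cdot M_f A_\Phi g\bigr](x+H) \\
    &= \overline{\Phi(x)}\, f(x+H)\, \Phi(x)\, g(x+H) = f(x+H)\, g(x+H).
\end{align*}
Hence $A_\Phi^\ast M_f A_\Phi$ is the standard multiplication operator by $f$ on $L^2(\Xi/H)$, whose operator norm is known to be $\|f\|_\infty$. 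Since conjugation by a unitary preserves operator norm, $\|M_f\| = \|f\|_\infty$.

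I do not expect any genuine obstacle: all the delicate content (existence of the cross section $\gamma$, the cocycle computation verifying the functional equation for $\Phi$, and unitarity of $A_\Phi$) has already been packaged into the preceding lemmata. The only thing worth a brief check is that the conjugation identity makes sense pointwise almost everywhere, i.e., that the $H$-invariance of $\overline{\Phi} \cdot M_f A_\Phi g$ matches the way $A_\Phi^\ast$ is defined on equivalence classes; this is immediate from the functional equation of $\Phi$ combined with $f$ being $H$-invariant.
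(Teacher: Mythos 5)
Your proposal is correct and follows essentially the same route as the paper: the authors likewise deduce (1) from $L^2(\Xi/H)\neq\{0\}$ via the unitary $A_\Phi$, and (2) from the intertwining relation $A_\Phi M_f = M_f A_\Phi$ (your conjugation computation is exactly this identity) together with the standard fact that multiplication operators on $L^2(\Xi/H)$ have norm $\|f\|_\infty$. No gaps.
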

\begin{proof}
    The first statement now follows from $L^2(\Xi / H) \neq \{ 0\}$. The second statement is a consequence of $\| M_f\|_{L^2(\Xi / H) \to L^2(\Xi / H)} = \| f\|_\infty$ and $A_\Phi M_f = M_f A_\Phi$. 
\end{proof}

As the next step, we translate the action of $U_x$ from $L^2(\Xi \sslash H)$ to $L^2(\Xi / H)$. Direct computations show:

\begin{lem}
    Let $x \in \Xi$. Then, 
    \begin{align*}
        A_\Phi^\ast U_x A_\Phi f(t+H) = \frac{\Phi(t+x)}{\Phi(t)} m(t,x) f(t+x+H), \quad f \in L^2(\Xi / H).
    \end{align*}
    We note that this expression makes sense, as the map $t \mapsto \frac{\Phi(t+x)}{\Phi(t)} m(t,x)$ is $H$-invariant.
\end{lem}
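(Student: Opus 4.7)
The statement is a direct unwinding of the definitions of $A_\Phi$, $A_\Phi^\ast$ and $U_x$, together with a bookkeeping check that the resulting function descends to $\Xi/H$. I would simply compute $A_\Phi^\ast U_x A_\Phi f$ one operator at a time. Starting from $f \in L^2(\Xi/H)$, the definition of $A_\Phi$ gives $A_\Phi f(t) = \Phi(t) f(t+H)$; applying $U_x$ then produces $m(t,x)\Phi(t+x) f(t+x+H)$; and the description of $A_\Phi^\ast$ from the preceding lemma (multiplication by $\overline{\Phi}$ followed by passing to the quotient) yields
\[
A_\Phi^\ast U_x A_\Phi f(t+H) \;=\; \overline{\Phi(t)}\, m(t,x)\, \Phi(t+x)\, f(t+x+H) \;=\; \frac{\Phi(t+x)}{\Phi(t)}\, m(t,x)\, f(t+x+H).
\]
This first part is entirely mechanical once the formula for $A_\Phi^\ast$ is accepted.

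The non-trivial checkpoint is the closing remark that $t \mapsto \tfrac{\Phi(t+x)}{\Phi(t)} m(t,x)$ descends to $\Xi/H$, which is needed both to make sense of the expression and to ensure that the application of $A_\Phi^\ast$ above was legal. I would verify this by replacing $t$ with $t+h$ for $h \in H$ and applying the functional equation of Lemma \ref{lemma:multiplier} to both $\Phi(t+h)$ and $\Phi((t+x)+h)$. The $\alpha(h)$ factors cancel, and $H$-invariance reduces to the identity
\[
m(t+h, x)\, \frac{m(h,t)}{m(h,t+x)} \;=\; m(t,x),
\]
which is a direct rearrangement of the cocycle relation $m(h+t,x)m(h,t) = m(h,t+x)m(t,x)$ obtained from the defining multiplier identity with the substitution $x \mapsto h$, $y \mapsto t$, $z \mapsto x$.

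I do not anticipate a genuine obstacle: the lemma is essentially a change-of-variables statement asserting that $A_\Phi$ intertwines $U_x$ with an explicit $m$-twisted translation on $L^2(\Xi/H)$, and the hardest structural point — the existence and quasi-invariance of $\Phi$ — has already been handled by Lemma \ref{lemma:multiplier}. The only care required is the cocycle bookkeeping just described.
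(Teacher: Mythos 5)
Your computation is correct and is exactly the ``direct computation'' the paper invokes (the paper states the lemma with no written proof beyond that remark): unwind $A_\Phi$, $U_x$, $A_\Phi^\ast$ in turn, then check $H$-invariance of the multiplier via Lemma \ref{lemma:multiplier} and the cocycle identity $m(h+t,x)m(h,t)=m(h,t+x)m(t,x)$, which is precisely what is needed. No gaps.
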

Indeed, the above lemma shows why we prefer to work with $L^2(\Xi \sslash H)$ as a representing space, even though one could have expected to work on $L^2(\Xi / H)$ directly: For defining the representing operators on $L^2(\Xi / H)$, one would have to choose the Borel-measurable section $\gamma$ to set up the operators $U_x$. We find it more appropriate to define the representation without any reference to such a choice, and use $\gamma$ only as a tool for our method of proof.

The above expression for $A_\Phi^\ast U_x A_\Phi$ looks cumbersome to work with. Fortunately, it simplifies significantly when $x \in H$:
\begin{lem}
    Let $h \in H$. Then,
    \begin{align*}
        A_\Phi^\ast U_h A_\Phi f(t+H) = \frac{1}{\alpha(h)} \sigma(t+H, h) f(t+H), \quad f \in L^2(\Xi / H).
    \end{align*}
\end{lem}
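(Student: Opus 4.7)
The plan is to specialize the preceding lemma to the case $x = h \in H$ and simplify. Since $h \in H$ means $t + h + H = t + H$ in the quotient, the argument of $f$ collapses to $t + H$, pulling $f(t+H)$ outside. What remains to identify is the scalar factor
$$
\frac{\Phi(t+h)}{\Phi(t)} \, m(t,h).
$$

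For this I would invoke the functional equation from Lemma \ref{lemma:multiplier}, namely $\Phi(t+h) = \frac{\Phi(t)}{\alpha(h) m(h,t)}$, which immediately gives
$$
\frac{\Phi(t+h)}{\Phi(t)} \, m(t,h) = \frac{m(t,h)}{\alpha(h) m(h,t)} = \frac{1}{\alpha(h)} \, \sigma(t,h),
$$
by the very definition of $\sigma$ as the quotient $m(x,y)/m(y,x)$. This is the content of the lemma up to the question of well-definedness of the final expression as a function on $\Xi/H$.

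To justify writing $\sigma(t+H,h)$ in place of $\sigma(t,h)$, I would use that $H$ is Lagrangian, so $\sigma|_{H \times H} \equiv 1$. Since $\sigma(\cdot, h)$ is a character of $\Xi$, replacing $t$ by $t + h'$ with $h' \in H$ multiplies the value by $\sigma(h', h) = 1$. Hence $\sigma(\cdot, h)$ descends to a well-defined character of $\Xi/H$, and the formula may be rewritten as claimed.

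There is essentially no obstacle here: the proof is a one-line substitution into the previous lemma together with the Lagrangian property used to pass to the quotient. The only thing one must be careful about is keeping the direction of the cocycle $m(t,h)$ versus $m(h,t)$ straight when applying the transformation rule for $\Phi$, but this matches exactly with the definition of $\sigma$.
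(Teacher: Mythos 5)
Your proposal is correct and follows essentially the same route as the paper: specialize the formula for $A_\Phi^\ast U_x A_\Phi$ to $x = h \in H$, use $f(t+h+H) = f(t+H)$, reduce to the identity $\frac{\Phi(t+h)}{\Phi(t)}m(t,h) = \frac{1}{\alpha(h)}\sigma(t,h)$ via the functional equation of Lemma \ref{lemma:multiplier}, and note the $H$-invariance of $t \mapsto \sigma(t,h)$. Your write-up is in fact slightly more explicit than the paper's, which leaves the substitution and the $H$-invariance (via $\sigma|_{H\times H}=1$) as immediate observations.
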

\begin{proof}
    Clearly, we have $f(t+h+H) = f(t+H)$ for $t \in \Xi$ and $h \in H$. Hence, we only need to show that
    \begin{align*}
        \frac{\Phi(t+h)}{\Phi(t)} m(t,h) = \frac{1}{\alpha(h)} \sigma(t,h).
    \end{align*}
    But this is an immediate consequence of Lemma \ref{lemma:multiplier}. Hence, observing that the map $t \mapsto \sigma(t, h)$ is $H$-invariant finishes the proof.
\end{proof}
Recall, as a basic fact from the duality theory of locally compact abelian groups, that $\widehat{H} \cong \widehat{\Xi} / A(\widehat{\Xi}, H)$, where $A(\widehat{\Xi}, H)$ denotes the annihilator of $H$ in $\widehat{\Xi}$. Using this and the fact that $\Xi \ni z \mapsto \sigma(\cdot, z) \in \widehat{\Xi}$ is assumed to be an isomorphism of lca groups, we see that $A(\widehat{\Xi}, H) \cong H^\sigma = H$ such that $\widehat{H} \cong \Xi / H$. By applying the Pontryagin dual again, we see that every character of $\Xi / H$ is of the form $\chi(t+H) = \sigma(t+H, h)$ for some $h \in H$. Hence, we obtain:
\begin{lem}
    Let $(\Xi, m)$ be an abelian phase space and $H \subset \Xi$ a first-countable Lagrangian subgroup. Then,
    \begin{align*}
        \operatorname{span} \{ A_\Phi^\ast U_h A_\Phi: ~h \in H\} = \operatorname{span} \{ M_\chi \in \mathcal L(L^2(\Xi / H)): ~\chi \in \widehat{\Xi / H} \}.
    \end{align*}
\end{lem}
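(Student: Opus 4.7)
The proof plan is to combine the immediately preceding lemma with the Pontryagin duality argument that the paper sketches in the paragraph just before the statement. By the preceding lemma, for every $h \in H$ we have the identity
\begin{align*}
    A_\Phi^\ast U_h A_\Phi = \frac{1}{\alpha(h)}\, M_{\chi_h}, \qquad \chi_h(t+H) := \sigma(t+H, h).
\end{align*}
Since $\alpha(h) \in \mathbb T$ is nonzero, the left-hand span in the statement coincides with $\operatorname{span}\{ M_{\chi_h} : h \in H\}$, so the problem reduces to showing that $\{\chi_h : h \in H\} = \widehat{\Xi/H}$ as a set.

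For the inclusion $\{\chi_h : h \in H\} \subset \widehat{\Xi/H}$, I would note that $\sigma$ is a bicharacter, so $\chi_h$ is a continuous character of $\Xi$; the Lagrangian condition $H = H^\sigma$ gives $\sigma(h', h) = 1$ for $h', h \in H$, so $\chi_h$ is trivial on $H$ and hence descends to a character of $\Xi/H$. For the reverse inclusion, I would formalize the duality chain already indicated in the text: the given isomorphism $\Xi \ni x \mapsto \sigma(\cdot, x) \in \widehat{\Xi}$ sends the closed subgroup $H^\sigma$ onto the annihilator $A(\widehat{\Xi}, H)$, and by the Lagrangian hypothesis this is simply $H$. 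Thus
\begin{align*}
    \widehat{H} \cong \widehat{\Xi}/A(\widehat{\Xi}, H) \cong \Xi/H,
\end{align*}
and applying Pontryagin duality once more yields $\widehat{\Xi/H} \cong \widehat{\widehat H} \cong H$. Tracing the canonical identifications shows that this last isomorphism sends $h \in H$ precisely to the character $\chi_h$, which establishes surjectivity of $h \mapsto \chi_h$ onto $\widehat{\Xi/H}$.

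The only real bookkeeping step, and therefore the most likely source of confusion, is verifying that the composition of the three canonical isomorphisms above really matches the concrete formula $\chi_h(t+H) = \sigma(t+H, h)$; this is essentially forced by the definitions but must be chased through dualization twice. Once that identification is in place, the equality of the two spans is immediate from the preceding lemma, and no further analytic content is needed.
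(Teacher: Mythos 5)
Your proposal is correct and follows essentially the same route as the paper: the authors likewise combine the preceding lemma (giving $A_\Phi^\ast U_h A_\Phi = \alpha(h)^{-1}M_{\chi_h}$ with $\chi_h(t+H)=\sigma(t+H,h)$) with the duality chain $\widehat{H}\cong\widehat{\Xi}/A(\widehat{\Xi},H)$, $A(\widehat{\Xi},H)\cong H^\sigma=H$, and a second application of Pontryagin duality to conclude that every character of $\Xi/H$ is of the form $\chi_h$. The identification you flag as the main bookkeeping step is exactly the point the paper also leaves implicit.
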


\begin{lem}
    Let $(\Xi, m)$ be an abelian phase space and $H \subset \Xi$ a first-countable Lagrangian subgroup. Then,
    \begin{align*}
        A_\Phi^\ast \mathcal L(L^2(\Xi \sslash H))_H A_\Phi = \{ A \in \mathcal L(L^2(\Xi / H)): ~M_\chi A = AM_\chi, ~\chi \in \widehat{\Xi / H}\}.
    \end{align*}
\end{lem}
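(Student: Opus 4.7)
The plan is to deduce the identity directly from the preceding lemma, together with the elementary fact that commutation passes through linear span. The argument will have no analytic content of its own; the work has already been done in setting up $A_\Phi$ and in computing the span of the operators $A_\Phi^\ast U_h A_\Phi$.

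First, since $A_\Phi$ is a unitary intertwiner between $L^2(\Xi/H)$ and $L^2(\Xi \sslash H)$, I observe that an operator $B \in \mathcal L(L^2(\Xi \sslash H))$ satisfies $U_h B = B U_h$ for all $h \in H$ if and only if the operator $A := A_\Phi^\ast B A_\Phi \in \mathcal L(L^2(\Xi/H))$ satisfies $(A_\Phi^\ast U_h A_\Phi) A = A (A_\Phi^\ast U_h A_\Phi)$ for all $h \in H$. Hence the left-hand side of the claimed identity is precisely the commutant in $\mathcal L(L^2(\Xi/H))$ of the family $\{A_\Phi^\ast U_h A_\Phi : h \in H\}$.

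Next, I invoke the trivial but crucial observation that an operator commutes with every element of a subset $\mathcal S \subset \mathcal L(L^2(\Xi/H))$ if and only if it commutes with every element of $\operatorname{span}(\mathcal S)$, since the commutator is bilinear. By the previous lemma we have
\begin{equation*}
\operatorname{span}\{A_\Phi^\ast U_h A_\Phi : h \in H\} = \operatorname{span}\{M_\chi : \chi \in \widehat{\Xi / H}\},
\end{equation*}
so the commutant of the left family equals the commutant of the right family, which is by definition the right-hand side of the claimed identity. This completes the argument.

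The main thing to watch out for is that one might worry about needing a closed or weakly closed span in order to transfer commutation, but this is unnecessary: commuting with every element of a \emph{set} is the same as commuting with every element of its algebraic span, no closure required. Accordingly, the only genuine input is the equality of spans established in the previous lemma, and the rest is formal.
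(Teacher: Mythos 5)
Your proof is correct and follows the same route the paper intends: the lemma is stated there without an explicit proof, being treated as an immediate consequence of the preceding span identity, and your argument (conjugating the commutation relation by the unitary $A_\Phi$, then using that the commutant of a set equals the commutant of its linear span) is exactly the intended one. The remark that no closure of the span is needed is a correct and worthwhile clarification.
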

Clearly, if $A$ commutes with every $M_\chi$, then $A$ commutes also with every operator in $\operatorname{span}\{ M_\chi: ~\chi \in \widehat{\Xi /H}\}$. Further, this span is dense, in weak operator topology, in $\{ M_f: ~f \in L^\infty (\Xi / H)\}$. From here, it is simple to conclude that
\begin{align*}
    A_\Phi^\ast \mathcal L(L^2(\Xi \sslash H))_H A_\Phi = \{ M_f \in \mathcal L(L^2(\Xi / H)): ~f \in L^\infty(\Xi / H)\}',
\end{align*}
the commutant of the multiplication operators. But the multiplication operators are well-known to form a factor (in the sense of von Neumann-algebras), i.e., the commutant is again simply the set of multiplication operators. Hence, we arrive at:
\begin{prop}\label{prop:multiplicationoperatorsintertwine}
    $\mathcal L(L^2(\Xi \sslash H))_H = \{ M_f \in \mathcal L(L^2(\Xi \sslash H)): ~f \in L^\infty(\Xi / H)\}$. As von Neumann-algebras, we therefore have:
\begin{align*}
    \mathcal L(L^2(\Xi \sslash H))_H \cong L^\infty(\Xi / H).
\end{align*}
\end{prop}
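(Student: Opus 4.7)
The plan is to finish from the immediately preceding lemma, which identifies $A_\Phi^* \mathcal{L}(L^2(\Xi\sslash H))_H A_\Phi$ with the commutant $\{M_\chi : \chi \in \widehat{\Xi/H}\}' \subset \mathcal{L}(L^2(\Xi/H))$. The remaining work splits naturally into three steps: first, upgrade commutation with all characters $M_\chi$ to commutation with every bounded multiplier $M_f$; second, invoke that the bounded multipliers form a MASA; third, transfer the resulting equality back through the unitary $A_\Phi$.

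For the first step I would show that $\operatorname{span}\{M_\chi : \chi \in \widehat{\Xi/H}\}$ is WOT-dense in $\{M_f : f \in L^\infty(\Xi/H)\}$. This reduces to the classical weak-$*$-density of trigonometric polynomials in $L^\infty(\Xi/H)$: for fixed $g, h \in L^2(\Xi/H)$ the matrix coefficient $f \mapsto \langle M_f g, h\rangle$ is integration against $g\overline{h} \in L^1(\Xi/H)$, and such $L^1$-functionals are detected by characters via Pontryagin duality. Because commutants are WOT-closed, this density upgrade gives $\{M_\chi\}' = \{M_f : f \in L^\infty(\Xi/H)\}'$.

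For the second step I would cite the standard fact that $\{M_f : f \in L^\infty(\Xi/H)\}$ is its own commutant in $\mathcal{L}(L^2(\Xi/H))$ (the maximal abelian property, informally called the ``factor'' property in the text). Combined with step one, this yields $A_\Phi^* \mathcal{L}(L^2(\Xi\sslash H))_H A_\Phi = \{M_f : f \in L^\infty(\Xi/H)\}$. For the third step, since $\Phi$ is unimodular and $A_\Phi$ is multiplication by $\Phi$, conjugation by $A_\Phi$ sends $M_f$ (multiplication on $L^2(\Xi/H)$) to multiplication by $f \circ \pi$ on $L^2(\Xi\sslash H)$: the factors of $\Phi$ and $\overline{\Phi}$ cancel pointwise. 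Together with the norm equality $\|M_f\| = \|f\|_\infty$ established in the preceding proposition, this gives both the operator identification and the von Neumann algebra isomorphism $\mathcal{L}(L^2(\Xi\sslash H))_H \cong L^\infty(\Xi/H)$.

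The main obstacle is really just the weak-$*$-density in step one, which, while classical, requires some care on an arbitrary LCA group (in particular $\Xi/H$ need not be second countable); the remainder is either formal or immediate from the preceding lemmas.
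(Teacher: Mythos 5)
Your proposal is correct and follows essentially the same route as the paper: pass from the commutant of $\{M_\chi\}$ to the commutant of all bounded multiplication operators via WOT-density of the span of characters (equivalently, weak-$*$ density of trigonometric polynomials in $L^\infty(\Xi/H)$, which rests on Fourier uniqueness on $L^1$), invoke that the multiplication operators are maximal abelian, and conjugate back through $A_\Phi$. The only differences are that you spell out the density step and the transfer through $A_\Phi$ in more detail than the paper does (and you rightly read the paper's ``factor'' as ``maximal abelian''); note also that your worry about second countability is unnecessary, since $L^1$-Fourier uniqueness holds for arbitrary LCA groups.
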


We now turn towards $\mathcal C_1(L^2(\Xi \sslash H))_H$. Observe that
\begin{align*}
    \mathcal C_1(L^2(\Xi \sslash H))_H =  \{ M_f \in \mathcal L(L^2(\Xi \sslash H)): ~\| \alpha_x(M_f) - M_f\|_{op} \to 0, ~x \to 0\}.
\end{align*}
\begin{lem}
    Let $f \in L^\infty(\Xi / H)$ and $x \in \Xi$. Then, $\alpha_x(M_f) = M_{f(\cdot + x)}$. 
\end{lem}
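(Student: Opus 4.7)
The plan is to unpack the definition $\alpha_x(M_f) = U_x M_f U_x^\ast$ and compute its action on an arbitrary $g \in L^2(\Xi \sslash H)$. Using the formulas $U_x^\ast = \overline{m(x,-x)}\, U_{-x}$, $U_y h(t) = m(t,y) h(t+y)$, and $M_f h(t) = f(t+H) h(t)$ from the earlier part of the section, a direct substitution will give
\begin{align*}
(U_x M_f U_x^\ast g)(t) = m(t,x)\, \overline{m(x,-x)}\, m(t+x,-x)\, f(t+x+H)\, g(t).
\end{align*}
The goal then reduces to showing that the scalar prefactor $m(t,x)\, \overline{m(x,-x)}\, m(t+x,-x)$ is identically $1$.

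This scalar identity is a one-line consequence of the cocycle identity $m(u+v,w) m(u,v) = m(u,v+w) m(v,w)$, specialized to $(u,v,w) = (t,x,-x)$, which gives $m(t+x,-x)\, m(t,x) = m(t,0)\, m(x,-x) = m(x,-x)$ after invoking $m(y,0) = 1$. Dividing by $m(x,-x)$ kills the prefactor and the computation collapses to $(U_x M_f U_x^\ast g)(t) = f(t+x+H) g(t)$, which is precisely $M_{f(\cdot + x)} g(t)$. Well-definedness of the translated symbol on $\Xi/H$ is automatic, since $H$ is a subgroup and $f(\cdot + x)$ is evaluated only through the coset $t+x+H$.

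I do not anticipate any real obstacle here; the lemma is pure bookkeeping with the cocycle identity, and the content of the result is that the translation action on $\mathcal{L}(L^2(\Xi \sslash H))_H$ intertwines, under the identification of Proposition \ref{prop:multiplicationoperatorsintertwine}, with the natural translation action on $L^\infty(\Xi/H)$. The only step that could easily be miscopied is the shift in the second argument of $m$ when applying $U_x^\ast$: one must carefully track $m(t,-x)$ from evaluating $U_{-x}g$ at $t$, against $m(t+x,-x)$ arising when $M_f U_x^\ast g$ is subsequently evaluated at $t+x$. Once these are kept straight, the cocycle relation delivers the cancellation immediately.
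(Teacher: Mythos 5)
Your proposal is correct and follows essentially the same route as the paper: write $\alpha_x(M_f)=U_xM_fU_x^\ast$, use $U_x^\ast=\overline{m(x,-x)}U_{-x}$, and cancel the scalar prefactor $m(t,x)\overline{m(x,-x)}m(t+x,-x)$ via the cocycle identity specialized at $(t,x,-x)$. The paper's proof is exactly this computation; you merely make the specialization of the cocycle identity explicit where the paper leaves it to the reader.
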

\begin{proof}
    This simply follows from $U_x^\ast = \overline{m(x,-x)} U_{-x}$ (which is readily verified) and:
    \begin{align*}
        U_x M_f U_x^\ast h(t) &= m(t,x) f(t+x+H) [U_x^\ast h](t+x)\\
        &= m(t,x) f(t+x+H) \overline{m(x,-x)} [U_{-x}]h(t+x)\\
        &= \frac{m(t,x) m(t+x,-x)}{m(x,-x)} f(t+x+H) h(t).
    \end{align*}
    Using the cocycle relation, we see that $\frac{m(t,x) m(t+x,-x)}{m(x,-x)} = 1$, hence the claim follows.
\end{proof}
\begin{lem}
    Let $f \in L^\infty(\Xi / H)$. Then, $M_f \in \mathcal C_1(L^2(\Xi \sslash H))$ if and only if $f \in \operatorname{BUC}(\Xi / H)$.
\end{lem}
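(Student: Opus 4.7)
The plan is to combine the three inputs that have just been assembled: the identity $\alpha_x(M_f) = M_{f(\cdot + x)}$ from the preceding lemma, the fact (established via $A_\Phi$) that $\|M_g\|_{\mathrm{op}} = \|g\|_{L^\infty(\Xi/H)}$ for every $g \in L^\infty(\Xi/H)$, and the standard translation-continuity characterization of $\operatorname{BUC}$. Putting the first two together, $M_f \in \mathcal{C}_1(L^2(\Xi \sslash H))$ if and only if
\[
    \|f(\cdot + x) - f\|_{L^\infty(\Xi/H)} \longrightarrow 0 \quad \text{as } x \to 0 \text{ in } \Xi.
\]
So the whole statement reduces to showing that this $\Xi$-translation continuity of $f$ in $L^\infty$-norm is equivalent to $f$ having a representative in $\operatorname{BUC}(\Xi/H)$.

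For the easy direction, if $f \in \operatorname{BUC}(\Xi/H)$, then for given $\varepsilon>0$ pick a neighborhood $V$ of $0$ in $\Xi/H$ with $\sup_{y}|f(y+v)-f(y)| < \varepsilon$ for $v \in V$; the preimage $\pi^{-1}(V)$ is a neighborhood of $0$ in $\Xi$, and for $x$ in this preimage the pullback gives $\|f(\cdot+x)-f\|_\infty<\varepsilon$. For the converse I would first promote the $\Xi$-continuity to $(\Xi/H)$-continuity: because $\pi:\Xi\to\Xi/H$ is continuous and open, the images $\pi(W)$ of neighborhoods $W$ of $0$ in $\Xi$ are exactly a neighborhood basis of $0$ in $\Xi/H$, and $\|f(\cdot+x)-f\|_\infty$ only depends on $x+H$, so the hypothesis rephrases as $\|f(\cdot+v)-f\|_{L^\infty(\Xi/H)}\to 0$ as $v\to 0$ in $\Xi/H$.

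The main work — and the step I expect to be the main (mild) obstacle — is then the classical fact that an $L^\infty$ function on a locally compact abelian group whose translation action is norm-continuous at the origin agrees almost everywhere with a bounded uniformly continuous function. I would prove this by picking an approximate identity $(\varphi_n)$ in $L^1(\Xi/H)$ consisting of nonnegative, compactly supported functions with $\int \varphi_n = 1$ and shrinking supports. The convolutions $\varphi_n * f$ are automatically in $\operatorname{BUC}(\Xi/H)$ (continuity of translation on $L^1$, together with boundedness of $f$, gives uniform continuity; boundedness is immediate). The estimate
\[
    |(\varphi_n * f)(y) - f(y)| \le \int \varphi_n(v)\, \|f(\cdot - v) - f\|_\infty \, dv
\]
shows $\|\varphi_n * f - f\|_\infty \to 0$, so $f$ is a uniform limit of $\operatorname{BUC}$ functions and hence has a $\operatorname{BUC}$ representative. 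This completes the equivalence, and hence yields Corollary~\ref{corollary} together with Proposition~\ref{prop:multiplicationoperatorsintertwine}.
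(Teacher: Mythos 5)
Your proof is correct and follows the same route as the paper: reduce via $\alpha_x(M_f)=M_{f(\cdot+x)}$ and $\|M_g\|=\|g\|_\infty$ to the translation-continuity characterization of $\operatorname{BUC}(\Xi/H)$. The only difference is that the paper simply asserts this last equivalence as standard, whereas you supply a correct proof of it (openness of $\pi$ plus the approximate-identity/convolution argument), which is a welcome but not essentially different elaboration.
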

\begin{proof}
    Follows from the previous lemma and
    \begin{align*}
        \| \alpha_x(M_f) - M_f\| = \| M_{f(\cdot + x) - f}\| = \| f(\cdot + x) - f\|_\infty.
    \end{align*}
    Now, $\| f(\cdot + x) - f\|_\infty \to 0$ as $x \to 0$ if and only if $f\in\operatorname{BUC}(\Xi / H)$.
\end{proof}
As an immediate consequence of the lemma, we obtain:
\begin{thm}
    The following holds true: $\mathcal C_1(L^2(\Xi \sslash H))_H \cong \operatorname{BUC}(\Xi / H)$.
\end{thm}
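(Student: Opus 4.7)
The plan is to deduce the theorem directly by combining Proposition \ref{prop:multiplicationoperatorsintertwine} with the two preceding lemmas, which together already do essentially all of the work. Since the statement is presented as an immediate consequence, the main task is to verify that the correspondence $f \mapsto M_f$ furnishes an isometric $\ast$-isomorphism at the Banach/$C^\ast$-algebra level, not merely a bijection.

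First I would recall from Proposition \ref{prop:multiplicationoperatorsintertwine} that every element of $\mathcal L(L^2(\Xi \sslash H))_H$ is of the form $M_f$ for some $f \in L^\infty(\Xi/H)$, and that $f \mapsto M_f$ is a $\ast$-isomorphism onto $\mathcal L(L^2(\Xi\sslash H))_H$ (with $\|M_f\| = \|f\|_\infty$ by the earlier proposition on norms). Unwinding the definition
\begin{align*}
    \mathcal C_1(L^2(\Xi \sslash H))_H = \mathcal C_1(L^2(\Xi \sslash H)) \cap \mathcal L(L^2(\Xi \sslash H))_H,
\end{align*}
an element of $\mathcal C_1(L^2(\Xi \sslash H))_H$ must already be a multiplication operator $M_f$ with $f \in L^\infty(\Xi/H)$, and additionally it must be norm-continuous under the action $\alpha_x$.

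Next I would invoke the lemma just proved, which characterizes precisely this norm-continuity: $M_f \in \mathcal C_1(L^2(\Xi \sslash H))$ if and only if $f \in \operatorname{BUC}(\Xi/H)$. Consequently the $\ast$-isomorphism $f \mapsto M_f$ from $L^\infty(\Xi/H)$ onto $\mathcal L(L^2(\Xi\sslash H))_H$ restricts to a bijection between $\operatorname{BUC}(\Xi/H)$ and $\mathcal C_1(L^2(\Xi\sslash H))_H$. Since isometry, linearity, multiplicativity, and the $\ast$-structure are inherited from the larger isomorphism of Proposition \ref{prop:multiplicationoperatorsintertwine}, this restriction is itself an isometric $\ast$-isomorphism of $C^\ast$-algebras, yielding $\mathcal C_1(L^2(\Xi \sslash H))_H \cong \operatorname{BUC}(\Xi/H)$.

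I do not anticipate any real obstacle: all of the substantive content has been absorbed into the earlier lemmas (the identification of the $H$-invariant operators with multiplication operators, the norm equality $\|M_f\|=\|f\|_\infty$, and the characterization of norm-continuity under $\alpha$ as uniform continuity on $\Xi/H$). The only point worth being careful about is emphasizing that one obtains an isomorphism of $C^\ast$-algebras (not merely a set-theoretic identification), which is automatic because the $\operatorname{BUC}$ subalgebra sits inside $L^\infty$ as a $C^\ast$-subalgebra and its image sits inside $\mathcal L(L^2(\Xi\sslash H))_H$ correspondingly.
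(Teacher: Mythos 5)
Your proposal is correct and follows exactly the paper's route: the paper also obtains the theorem as an immediate consequence of Proposition \ref{prop:multiplicationoperatorsintertwine} (identifying $\mathcal L(L^2(\Xi \sslash H))_H$ with the multiplication operators, with $\|M_f\|=\|f\|_\infty$) together with the lemma characterizing $M_f \in \mathcal C_1(L^2(\Xi \sslash H))$ by $f \in \operatorname{BUC}(\Xi/H)$. Your added remark that the restriction of the $\ast$-isomorphism is itself an isometric $\ast$-isomorphism is a harmless elaboration of what the paper leaves implicit.
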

As the last part of this section, we complete the proofs of our main results:
\begin{proof}[Proof of Theorem \ref{thm:main} and Corollary \ref{corollary}]
To finish the proof of both the theorem and its corollary, we need to show that the representation on $L^2(\Xi \sslash H)$ is irreducible. When this is proven, an application of the Stone-von Neumann theorem finishes the proof.

Since the representation on $L^2(\Xi \sslash H)$ is unitarily equivalent with the one on $L^2(\Xi / H)$, we may instead prove irreducibility there. Let us denote $\widetilde{U}_x = A_\Phi^\ast U_x A_\Phi$. By Schur's lemma, irreducibility is equivalent to the statement:
\begin{align*}
    \{ A \in L^2(\Xi / H): ~\widetilde{U}_x A = A\widetilde{U}_x\} = \mathbb C I.
\end{align*}
This is what we are going to prove. By Proposition \ref{prop:multiplicationoperatorsintertwine}, this class is contained in the set of all multiplication operators. Hence, let $f \in L^\infty(\Xi/H)$ such that $\widetilde{U}_x M_f = M_f \widetilde{U}_x$ for every $x \in \Xi$. Writing out the definition, this implies that for every $g \in L^2(\Xi / H)$, for every $x \in \Xi$ and $t+H$-a.e.:
\begin{align*}
    f(t+H) &\frac{\Phi(t+x)}{\phi(t)} m(t,x) g(t+x+H) = M_f \widetilde{U}_x g(t+H) \\
    &= \widetilde{U}_x M_f g(t+H) = \frac{\Phi(t+x)}{\Phi(t)}m(t,x) g(t+x+H) f(t+x+H).
\end{align*}
Canceling out the unimodular terms, this yields
\begin{align*}
    f(t+H) g(t+x+H) = f(t+x+H) g(t+x+H),
\end{align*}
for every $g \in L^2(\Xi / H)$, every $x \in \Xi$ and almost every $t +H \in \Xi / H$. This clearly implies $f(t+H) = f(t+x+H)$ for every $x \in \Xi$ and almost-every $t+H \in \Xi / H$, which in turn implies that $f = const$ almost everywhere. Hence, the operator $M_f$ is contained in $\mathbb C I$, which is what we needed to prove.
\end{proof}

\section{An operator-valued version of the theorem}\label{sec:3}
For a locally compact abelian group $G$ and any Hilbert space $\mathcal K$, we denote by $L^2(G; \mathcal K)$ the $\mathcal K$-valued square-integrable functions on $G$. Note that this can be identified with the Hilbert space tensor product $L^2(G) \widehat{\otimes} \mathcal K$. Further, by $L^\infty(G; \mathcal L(\mathcal K))$, we denote the $\mathcal L(\mathcal K)$-valued essentially bounded functions on $G$. For $f \in L^\infty(G; \mathcal L(\mathcal K))$ and $g \in L^2(G; \mathcal K)$, the multiplication operator $M_f$ on $\mathcal L(L^2(G; \mathcal K))$ is defined as $M_f g(x) = f(x)(g(x))$. 

Given a phase space $(\Xi, m)$ and its irreducible unitary $m$-representation ($\mathcal H, U$), we define a projective representation on $\mathcal H \widehat{\otimes} \mathcal K$ by defining it on elementary tensors:
\begin{align*}
    U_x'(\varphi \otimes \psi) = U_x \varphi \otimes \psi, ~\varphi \in \mathcal H, \psi \in \mathcal K.
\end{align*}
Given this, we can consider (where, as usual, $H$ is a closed subgroup of $\Xi$):
\begin{align*}
    \mathcal L(\mathcal H \widehat{\otimes} \mathcal K)_H = \{ A \in \mathcal L(\mathcal H \widehat{\otimes} \mathcal K): ~U_h' A = A U_h' \text{ for every } h \in H\}.
\end{align*}
As a variation of Theorem \ref{thm:main}, we are going to prove the following result:
\begin{thm}\label{thm:matrix}
    Let $(\Xi, m)$ be an abelian phase space, $(\mathcal H, U)$ an irreducible $m$-projective unitary representation of $\Xi$ and $\mathcal K$ another Hilbert space. Further, let $H \subset \Xi$ be a first-countable Lagrangian subgroup. Then,
    \begin{align*}
        \mathcal  L(\mathcal H \otimes \mathcal K)_H \cong L^\infty(\Xi / H; \mathcal K).
    \end{align*}
\end{thm}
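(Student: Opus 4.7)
The plan is to imitate the scalar proof step by step, the main new ingredient being the identification of the commutant of a maximal abelian algebra after tensoring with the identity on $\mathcal K$. We note first that the statement almost certainly intends $L^\infty(\Xi/H;\mathcal L(\mathcal K))$, consistent with the multiplication operators defined in the paragraph preceding the theorem; we proceed under that reading.

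By the Stone-von Neumann theorem (Theorem of Baggett-Kleppner), we may realize $(\mathcal H,U)$ as $(L^2(\Xi\sslash H),U)$, and so $\mathcal H\widehat\otimes\mathcal K$ is unitarily equivalent to $L^2(\Xi\sslash H)\widehat\otimes\mathcal K$. The latter is naturally identified with the space $L^2(\Xi\sslash H;\mathcal K)$ of measurable $\mathcal K$-valued functions satisfying the same covariance condition $f(x+h)=\overline{\alpha(h)m(h,x)}f(x)$ and having finite norm on $\Xi/H$. Under this identification, the representation $U_x'$ acts by the same formula $U_x'f(t)=m(t,x)f(t+x)$, now applied to $\mathcal K$-valued functions. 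The unitary $A_\Phi\otimes I_{\mathcal K}$ then maps $L^2(\Xi\sslash H;\mathcal K)$ onto $L^2(\Xi/H;\mathcal K)=L^2(\Xi/H)\widehat\otimes\mathcal K$, and a routine repetition of the scalar computation yields that, for $h\in H$, $(A_\Phi\otimes I)^\ast U_h'(A_\Phi\otimes I)$ acts as multiplication by the scalar function $\frac{1}{\alpha(h)}\sigma(\cdot,h)$ tensored with $I_{\mathcal K}$.

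Thus $(A_\Phi\otimes I)^\ast\mathcal L(\mathcal H\widehat\otimes\mathcal K)_H(A_\Phi\otimes I)$ equals the commutant, inside $\mathcal L(L^2(\Xi/H;\mathcal K))$, of the family $\{M_\chi\otimes I_{\mathcal K}:\chi\in\widehat{\Xi/H}\}$. Exactly as in the scalar case, the Pontryagin duality argument $\widehat{H}\cong\Xi/H$ shows that these generate, in the weak operator topology, the full von Neumann algebra $L^\infty(\Xi/H)\otimes\mathbb C I_{\mathcal K}$. Hence the commutant we need is the commutant of $L^\infty(\Xi/H)\otimes\mathbb C I_{\mathcal K}$ in $\mathcal L(L^2(\Xi/H)\widehat\otimes\mathcal K)$.

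The heart of the proof is the identification of this commutant as $L^\infty(\Xi/H;\mathcal L(\mathcal K))$ acting by pointwise multiplication, i.e.\ the classical fact that for a maximal abelian von Neumann algebra $\mathcal A\subset\mathcal L(\mathcal H_0)$ one has $(\mathcal A\otimes\mathbb C I_{\mathcal K})'=\mathcal A\,\overline\otimes\,\mathcal L(\mathcal K)$, together with the standard description of this tensor product in the multiplicative model as decomposable operators. This is the step where the scalar argument (which ended by invoking that the scalar multiplications form a factor) must be replaced by a genuinely operator-valued statement; it is the only real obstacle, and it can be handled either by direct integral decomposition (Takesaki, Dixmier) or by a hands-on argument using matrix units $e_{ij}\in\mathcal L(\mathcal K)$ for a chosen orthonormal basis of $\mathcal K$ to reduce, entrywise, to the scalar commutant identity already established. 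Once this identification is in place, combining it with the unitary $A_\Phi\otimes I_{\mathcal K}$ and noting that on $L^2(\Xi\sslash H;\mathcal K)$ the multiplication operators are again given by functions in $L^\infty(\Xi/H;\mathcal L(\mathcal K))$ (the unitary $A_\Phi$ intertwines scalar multiplications, hence also operator-valued ones) yields the desired isomorphism $\mathcal L(\mathcal H\widehat\otimes\mathcal K)_H\cong L^\infty(\Xi/H;\mathcal L(\mathcal K))$.
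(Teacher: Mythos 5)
Your proposal is correct, and you rightly flag the typo in the statement: the paper's own proof indeed concludes with $F \in L^\infty(\Xi/H;\mathcal L(\mathcal K))$, not $L^\infty(\Xi/H;\mathcal K)$. The route differs somewhat from the paper's. You transport everything to the multiplication model $L^2(\Xi/H)\widehat\otimes\mathcal K$ via $A_\Phi\otimes I$, identify $\mathcal L(\mathcal H\widehat\otimes\mathcal K)_H$ with the commutant of $L^\infty(\Xi/H)\otimes\mathbb C I_{\mathcal K}$, and then invoke the von Neumann algebra fact $(\mathcal A\otimes\mathbb C I_{\mathcal K})'=\mathcal A\,\overline\otimes\,\mathcal L(\mathcal K)$ for maximal abelian $\mathcal A$, together with the identification of $L^\infty\,\overline\otimes\,\mathcal L(\mathcal K)$ with decomposable operators. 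The paper instead avoids any tensor-product commutation theorem: it fixes an orthonormal basis $(e_j)_{j\in\Gamma}$ of $\mathcal K$, defines matrix entries $A_{j,k}\in\mathcal L(\mathcal H)$ by $\langle A_{j,k}\varphi,\phi\rangle=\langle A(\varphi\otimes e_j),\phi\otimes e_k\rangle$, observes that $A\in\mathcal L(\mathcal H\widehat\otimes\mathcal K)_H$ iff every $A_{j,k}\in\mathcal L(\mathcal H)_H$ (since $U_h'=U_h\otimes I$ is diagonal), and applies the scalar theorem entrywise to get $VA_{j,k}V^\ast=M_{f_{j,k}}$, assembling these into $M_F$. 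This is precisely the ``hands-on argument using matrix units'' you mention as your fallback, so the two proofs meet there; your primary framing buys a cleaner conceptual statement (and makes the structure as a von Neumann algebra of decomposable operators explicit), while the paper's is more elementary and self-contained. Neither writeup fully addresses the measurability and uniform-boundedness issues in assembling an $\mathcal L(\mathcal K)$-valued $F$ from the family $(f_{j,k})$ when $\Gamma$ is uncountable, so you are not held to a higher standard there.
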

\begin{proof}
    Let $(e_j)_{j \in \Gamma}$ be an orthonormal basis of $\mathcal K$ (where $\Gamma$ is an appropriate index set). Then, for $A \in \mathcal L(\mathcal H \otimes \mathcal K)$, we can define $A_{j,k} \in \mathcal L(\mathcal H)$ by:
    \begin{align*}
        \langle A_{j,k}\varphi , \phi\rangle_{\mathcal H} := \langle A (\varphi \otimes e_j), (\phi \otimes e_k)\rangle_{\mathcal H \widehat{\otimes} \mathcal K}. 
    \end{align*}
    As a matter of fact, $A \in \mathcal L(\mathcal H \otimes \mathcal K)_H$ if and only if $A_{j,k} \in \mathcal L(\mathcal H)_H$ for every pair $j, k \in \Gamma$. 

    Let $V: \mathcal H \to L^2(\Xi / H)$ be the unitary operator intertwining the representations, obtained from the theorem of Stone and von Neumann. Then, the operator $V': \mathcal H \widehat{\otimes} \mathcal K \to L^2(\Xi / H; \mathcal K) \cong L^2(\Xi / H) \widehat{\otimes} \mathcal K$, defined by
    \begin{align*}
        V'(\varphi \otimes \psi) = V(\varphi) \otimes \psi,
    \end{align*}
    intertwines the representations $U_x'$. If $A_{j,k} \in \mathcal L(\mathcal H)_H$ for every $j, k\in \Gamma$, then $[V' A (V')^\ast]_{j,k} = V A_{j,k} V^\ast$. Hence, $A \in \mathcal L(\mathcal H \widehat{\otimes} \mathcal K)_H$ if and only if $V A_{j,k}V^\ast \in L^2(\Xi / H)_H$ for every $j, k \in \Gamma$ if and only if $VA_{j,k}V^\ast = M_{f_{j,k}}$ for some $f_{j,k} \in L^\infty(\Xi / H)$. Thus, $A \in \mathcal L(\mathcal H \otimes \mathcal K)_H$ if and only if $V' A (V')^\ast = M_F$ with $F \in L^\infty(\Xi / H; ~\mathcal L(\mathcal K))$, where $\langle F(x)e_j, e_k\rangle = f_{j,k}(x)$ almost everywhere. 
\end{proof}

\section{Examples}\label{sec:examples}
We end by providing a list of examples to which our result applies. 
\begin{ex}
    Let $\Xi = \mathbb R^{n} \times \mathbb R^n$ and $m((x, \xi), (y, \eta)) = e^{-iy\xi}$. Further, let $H \subset \Xi$ be a Lagrangian subgroup. Then, up to applying a symplectomorphism, $H$ is of the form 
    \begin{align*}
        H = [\mathbb R \otimes \{ 0\}]^k \times [\sqrt{\pi}\mathbb Z \times \sqrt{\pi}\mathbb Z]^{n-k},
    \end{align*}
    cf.~\cite[Theorem 3.16]{Fulsche_Rodriguez2023}. Hence, we see that
    \begin{align*}
        \mathcal L(\mathcal H)_H \cong L^\infty(\mathbb R^k \times \mathbb T^{n-k}),\\
        \mathcal C_1(\mathcal H)_H \cong \operatorname{BUC}(\mathbb R^k \times \mathbb T^{n-k}),
    \end{align*}
    reproducing the results from \cite{Fulsche_Rodriguez2023}.
\end{ex}
\begin{ex}
    Let $\mu$ denote the Gaussian measure $d\mu(z) = \frac{1}{\pi}e^{-|z|^2}~dz$ on $\mathbb C$. For $n \in \mathbb N$, the polyanalytic Fock space $F_n^2$ is defined as the space of all smooth functions $f \in L^2(\mathbb C, \mu)$ satisfying $\frac{\partial^n f}{\partial \overline{z}^n} = 0$. This space is a frequently studied reproducing kernel Hilbert space with an interesting operator theory, we refer to \cite{Lee-Guzman2024, Fulsche_Hagger2024} and references therein for details. For $z \in \mathbb C$ we define the Weyl operators $W_z$ on $F_n^2$ defined by
    \begin{align*}
        W_z f(w) = f(w-z) e^{w \cdot \overline{z} - \frac{|z|^2}{2}}.
    \end{align*}
    These operators form a projective unitary representation of $\Xi = \mathbb C \cong \mathbb R \times \mathbb R$ and fall within the framework described in this paper. Indeed, the representation $(F_n^2, W)$ of $\mathbb C$ is not irreducible, but can be written as the direct sum of $n$ irreducible subrepresentations: $F_n^2 = F_{(1)}^2 \oplus \dots \oplus F_{(n)}^2$, cf.~\cite[Section 3]{Fulsche_Hagger2024} for details (the spaces $F_{(j)}^2$ are the so-called \emph{true polyanalytic Fock spaces}). By Stone-von Neumann, all of the irreducible representations $(F_{(j)}^2, W)$ are unitarily equivalent. In particular, $F_n^2 \cong F_{(1)}^2 \otimes \mathbb C^n$. Hence, as a consequence of Theorem \ref{thm:matrix}, for any Lagrangian subgroup $H \subset \mathbb C$ we have:
    \begin{align*}
        \mathcal L(F_n^2)_H \cong L^\infty(\mathbb C / H; \mathbb C^{n \times n}).
    \end{align*}
    When $H$ is a Lagrangian subspace, this reproduces \cite[Theorem 10.2]{Lee-Guzman2024}. Of course, by our method we do not obtain the explicit formulas for the isomorphism discussed in \cite{Lee-Guzman2024}.
\end{ex}
\begin{ex}
    Let $\Xi = \mathbb Z \times \mathbb T$ and $m((k, \theta), (m, \vartheta)) = \theta^{-m}$ for $(k, \theta), (m, \vartheta) \in \Xi$. Then, both $\mathbb Z \times \{ 1\}$ and $\{ 0\} \times \mathbb T$ are clearly Lagrangian subgroups. Generalizing the first example, for each $\vartheta \in \mathbb T$ the subgroup $H_\vartheta = \{ (k, \vartheta^k): ~k \in \mathbb Z\}$ is Lagrangian. In this case, the map $(m, \theta) + H_\vartheta \mapsto \theta \vartheta^{-m}$ establishes the isomorphism $\Xi / H_\vartheta \cong \mathbb T$ such that $\mathcal L(\mathcal H)_{H_\vartheta} \cong L^\infty(\mathbb T)$. Further, $\mathcal L(\mathcal H)_{\{ 0\} \times \mathbb T} \cong \ell^\infty(\mathbb Z)$.
\end{ex}
\begin{ex}
    Let $p$ be a prime number and $\mathbb Q_p$ denote the additive group of $p$-adic numbers, topologized as the completion of $\mathbb Q$ with respect to the $p$-adic norm. Then, each $x \in \mathbb Q_p$ can be written as $x = \sum_{j=m}^\infty x_j p^j$, where $m \in \mathbb Z$ and $x_j \in \{ 0, 1, \dots, p-1\}$. The dual group $\widehat{\mathbb Q_p}$ can be identified with $\mathbb Q_p$ via
    \begin{align*}
        \mathbb Q_p \ni y \mapsto \xi_y, \quad \xi_y(x) = e^{2\pi i x\cdot y}.
    \end{align*}
    Here, for any $x \in \mathbb Q_p$, one understands $e^{2\pi i x}$ as
    \begin{align*}
        e^{2\pi i x} = e^{2\pi i \sum_{j=m}^\infty x_j p^j} = e^{2\pi i \sum_{j=m}^{-1} x_j p^j},
    \end{align*}
    where we use that $e^{2\pi i x_j p^j} = 1$ for every $j \geq 0$. Hence, $\Xi = \mathbb Q_p \times \mathbb Q_p$ is a phase space, when endowed with the multiplier $m((x, y), (x', y')) = e^{2\pi i x' \cdot y}$. Clearly, both $\mathbb Q_p\times \{ 0\}$ and $\{ 0\} \times \mathbb Q_p$ are Lagrangian subgroups with $\Xi /(\mathbb Q_p\times \{ 0\}) \cong \mathbb Q_p \cong \Xi / (\{ 0\} \times \mathbb Q_p)$. Besides this, another example of a Lagrangian subgroup is the $p$-adic lattice $H := \mathbb Z_p \times \mathbb Z_p$, where $\mathbb Z_p$ denotes the $p$-adic integers: Every $x\in \mathbb Q_p$ with $x_j = 0$ for $j < 0$. It is not hard to verify that $H = \mathcal H^\sigma$. Hence, $\mathcal L(\mathcal H)_H \cong L^\infty(\Xi / H)$. Note that $\mathbb Q_p / \mathbb Z_p \cong \mathbb Z(p^\infty)$ is the Pr\"{u}fer group such that $\Xi / H \cong \mathbb Z(p^\infty) \times \mathbb Z(p^\infty)$. Since the Pr\"{u}fer group is discrete, we obtain that $\mathcal L(\mathcal H)_H = \mathcal C_1(\mathcal H)_H$ for $H$ being the $p$-adic lattice.
\end{ex}
For the last example, we want to recall one of the results from \cite{Fulsche_Rodriguez2023}. As already mentioned before, there the discussions were made for $\Xi = \mathbb R^{2n}$ with the standard symplectic form, but everything mentioned there holds true in the general case, as long as the unique irreducible representation is square integrable (see \cite{Fulsche_Galke2023} for a discussion on general QHA on abelian phase spaces).

As in \cite[Corollary 3.4]{Fulsche_Rodriguez2023} one can prove:
\begin{thm}
    Let $(\Xi, m)$ be an abelian phase space such that the unique irreducible $m$-representation $(U_x)_{x \in \Xi}$ is square-integrable. Then, for a closed subgroup $H \subset \Xi$ it is
    \begin{align*}
        \mathcal L(\mathcal H)_H = \overline{\operatorname{span}} \{ U_x: ~x\in H^\sigma\},
    \end{align*}
    where the closure is taken in weak$^\ast$ topology.
\end{thm}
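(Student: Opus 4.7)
The plan is to extend the quantum spectral synthesis argument of \cite[Corollary 3.4]{Fulsche_Rodriguez2023} from $\Xi = \mathbb R^{2n}$ to the setting of general abelian phase spaces with square-integrable irreducible representation, using the quantum harmonic analysis framework developed in \cite{Fulsche_Galke2023}. There are two inclusions to establish.

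The easy direction $\overline{\operatorname{span}}\{U_x: x \in H^\sigma\} \subseteq \mathcal L(\mathcal H)_H$ follows from a direct computation: for $x \in H^\sigma$ and $h \in H$ the defining property $\sigma(x, h) = 1$ gives $m(x, h) = m(h, x)$, hence
\begin{align*}
U_x U_h = m(x, h) U_{x+h} = m(h, x) U_{h+x} = U_h U_x,
\end{align*}
so every such $U_x$ already lies in the commutant $\mathcal L(\mathcal H)_H$. Being a joint commutant, $\mathcal L(\mathcal H)_H$ is a von Neumann algebra and hence weak$^\ast$-closed, so it contains the entire weak$^\ast$-closed span.

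For the converse I would appeal to the Fourier--Weyl transform from quantum harmonic analysis. Under the square-integrability hypothesis, \cite{Fulsche_Galke2023} furnishes a (suitably normalized) unitary map $\mathcal F_W$ from the Hilbert--Schmidt operators on $\mathcal H$ onto $L^2(\Xi)$ given essentially by $\mathcal F_W A(x) = c \cdot \operatorname{Tr}(A U_x^\ast)$, and this map extends by duality to a weak$^\ast$-continuous map on all of $\mathcal L(\mathcal H)$ with values in an appropriate space of distributions on $\Xi$. Its key equivariance is that it intertwines the conjugation action $\alpha_z(A) = U_z A U_z^\ast$ with multiplication by the character $\sigma(\cdot, z)$. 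Consequently, $A \in \mathcal L(\mathcal H)_H$ if and only if $\mathcal F_W A$ is pointwise invariant under multiplication by $\sigma(\cdot, h)$ for every $h \in H$, i.e., if and only if the support of $\mathcal F_W A$ is contained in $H^\sigma$.

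The final step is the spectral synthesis assertion itself. Since $\mathcal F_W U_x$ is (up to normalization) the Dirac mass at $x$, the weak$^\ast$-closed span of $\{U_x : x \in H^\sigma\}$ corresponds under $\mathcal F_W$ to the weak$^\ast$-closed span of the point masses located in $H^\sigma$. As $H^\sigma$ is a closed subgroup of the LCA group $\Xi$, the spectral synthesis result (a closed subgroup is a set of synthesis) guarantees that any object whose spectrum sits inside $H^\sigma$ is a weak$^\ast$-limit of finite linear combinations of such point masses. Transporting this back through $\mathcal F_W$ yields $\mathcal L(\mathcal H)_H \subseteq \overline{\operatorname{span}}\{U_x : x \in H^\sigma\}$. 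The only technical obstacle lies in this last step, namely the careful extension of $\mathcal F_W$ to $\mathcal L(\mathcal H)$ and the precise formulation of the spectral synthesis statement in the dual space it takes values in; both were worked out in \cite{Fulsche_Rodriguez2023} for $\Xi = \mathbb R^{2n}$ and, as the authors emphasize, carry over verbatim to the general abelian setting via the machinery of \cite{Fulsche_Galke2023}, so no genuinely new argument is required.
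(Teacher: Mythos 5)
Your proposal is correct and follows essentially the same route as the paper, which in fact offers no written proof at all but simply asserts that the quantum spectral synthesis argument of \cite[Corollary 3.4]{Fulsche_Rodriguez2023} carries over verbatim via the framework of \cite{Fulsche_Galke2023}. Your sketch --- the easy inclusion from $\sigma(x,h)=1 \Leftrightarrow U_xU_h=U_hU_x$, and the converse via the Fourier--Weyl transform together with the fact that a closed subgroup of an LCA group is a set of spectral synthesis --- is a faithful account of exactly that argument.
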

The previous result can be used to generate a wealth of commutative matrix algebras, by applying it to finite phase spaces. Note that, in this case, the representing space $\mathcal H$ is finite dimensional, so that taking the closure in the above result is of course redundant.

    Upon applying the presented theory to finite abelian groups, it can be used to construct a wealth of commutative matrix algebras. To keep the examples computationally simple, we stick with rather small groups. 

Recall that for a finite abelian group $G$ it is $\widehat{G} \cong G$ (even though the isomorphism is not canonical). Further, any finite abelian group can be described as the finite product of finite cyclic groups. Since products of groups yield tensor product of commutative matrix algebras, we restrict our examples to the irreducible cases. In the following, we will always endow $\Xi = G \times \widehat{G}$ with the multiplier $m((x, \varphi), (y, \psi)) = \overline{\varphi(y)}$ such that the symplectic form equals $\sigma((x, \varphi), (y, \psi)) = \psi(x)/\varphi(y)$.

\begin{ex}
For $G = Z_2 = \{ 0, 1\}$, the dual group consists of the two characters $\varphi_0 = 1$ and $\varphi_1(1)  = -1$. On $\ell^2(Z_2) \cong \mathbb C^2$ the phase space is represented by
\begin{align*}
    U_{(0, \varphi_0)}  &\cong \begin{pmatrix}
        1 & 0 \\ 0 & 1
    \end{pmatrix}, \quad U_{(1, \varphi_0)} \cong \begin{pmatrix}
        0 & 1\\ 1 & 0
    \end{pmatrix},\\
    U_{(0, \varphi_1)} &\cong \begin{pmatrix}
        1 & 0 \\ 0 & -1
    \end{pmatrix}, \quad U_{(1, \varphi_1)} \cong \begin{pmatrix}
        0 & -1 \\ 1 & 0
    \end{pmatrix}. 
\end{align*}
It is not hard to figure out that the only Lagrangian subgroups of $\Xi$ are $H_1 = Z_2 \times \{ \varphi_0\}, H_2 = \{ 0 \} \times \widehat{Z_2}$ and $H_3 = \{ (0, \varphi_0), (1, \varphi_1)\}$. For these cases, the commutative matrix algebras are:
\begin{align*}
    \mathcal L(\mathcal H)_{H_1} &\cong \left \{ \begin{pmatrix}
        a & b \\ b & a
    \end{pmatrix}: ~a, b \in \mathbb C \right \}, \\
    \mathcal L(\mathcal H)_{H_2} &\cong \left \{ \begin{pmatrix}
        a & 0\\ 0 & b
    \end{pmatrix} : ~a, b \in \mathbb C \right \},\\
    \mathcal L(\mathcal H)_{H_3} &\cong \left \{ \begin{pmatrix}
        a & b\\ -b & a
    \end{pmatrix}: ~a, b \in \mathbb C \right \}.
\end{align*}
Of course, each of them has Gelfand spectrum consisting of two points.
\end{ex}
\begin{ex}
    For $G = Z_3 = \{ 0, 1, 2\}$, the dual group consists of the characters $\varphi_0 = 1$, $\varphi_1(j) = e^{2\pi i j/3}$ and $\varphi_2(j) = e^{-2\pi i / 3}$. The representing operators on $\ell^2(Z_3) \cong \mathbb C^3$ acting by $U_{(j, \varphi_k))} a(n) = \varphi_k(n) a(n-j)$ can be identified with the matrices:
    \begin{align*}
        U_{(0, \varphi_0)} &\cong \begin{pmatrix}
            1 & 0 & 0\\
            0 & 1 & 0\\
            0 & 0 & 1
        \end{pmatrix}, \quad 
        U_{(1, \varphi_0)} \cong \begin{pmatrix}
            0 & 0 & 1\\
            1 & 0 & 0\\
            0 & 1 & 0
        \end{pmatrix}, \quad 
        U_{(2, \varphi_0)} \cong \begin{pmatrix}
            0 & 1 & 0\\
            0 & 0 & 1\\
            1 & 0 & 0
        \end{pmatrix},\\
        U_{(0, \varphi_1)} &\cong \begin{pmatrix}
            1 & 0 & 0\\
            0 & e^{2\pi i/3} & 0\\
            0 & 0 & e^{-2\pi i/3}
        \end{pmatrix}, \quad 
        U_{(1, \varphi_1)} \cong \begin{pmatrix}
            0 & 0 & e^{-2\pi i/3}\\
            1 & 0 & 0\\
            0 & e^{2\pi i/3} & 0
        \end{pmatrix},\\
        U_{(2, \varphi_1)} &\cong \begin{pmatrix}
            0 & e^{2\pi i/3} & 0\\
            0 & 0 & e^{-2\pi i/3}\\
            1 & 0 & 0
        \end{pmatrix},\quad
        U_{(0, \varphi_2)} \cong \begin{pmatrix}
            1 & 0 & 0\\
            0 & e^{-2\pi i/3} & 0\\
            0 & 0 & e^{2\pi i/3}
        \end{pmatrix},\\ 
        U_{(1, \varphi_2)} &\cong \begin{pmatrix}
            0 & 0 & e^{2\pi i/3}\\
            1 & 0 & 0\\
            0 & e^{-2\pi i/3} & 0
        \end{pmatrix}, \quad 
        U_{(2, \varphi_2)} \cong \begin{pmatrix}
            0 & e^{-2\pi i/3} & 0\\
            0 & 0 & e^{2\pi i/3}\\
            1 & 0 & 0
        \end{pmatrix}.
    \end{align*}
    Further, it is not hard to determine that the Lagrangian subgroups of $\Xi$ are:
    $H_1 = \{ 0, 1, 2\} \times \{ \varphi_0\}$, $H_2 = \{ 0\} \times \{ \varphi_0, \varphi_1, \varphi_2\}$, $H_3 = \{ (0, \varphi_0), (1, \varphi_1), (2, \varphi_2)\}$ and $H_4 = \{ (0, \varphi_0), (1, \varphi_2), (2, \varphi_1)\}$. Hence, the commutative matrix algebras obtained are:
    \begin{align*}
        \mathcal L(\mathcal H)_{H_1} &\cong \left \{ \begin{pmatrix}
            a & b & c\\ c & a & b\\ b & c & a\end{pmatrix}: ~a, b, c \in \mathbb C\right \},\\
        \mathcal L(\mathcal H)_{H_2} &\cong \left \{ \begin{pmatrix}
            a & 0 & 0\\ 0 & b & 0 \\ 0 & 0 & c
        \end{pmatrix}: ~a, b, c\in \mathbb C\right \}, \\
        \mathcal L(\mathcal H)_{H_3} &\cong \left \{ \begin{pmatrix}
            a & e^{-2\pi i/3}b & e^{-2\pi i/3}c\\ c & a & e^{2\pi i/3}c \\ b & e^{2\pi i/3}c & a
        \end{pmatrix}: ~a, b, c\in \mathbb C\right \},\\
        \mathcal L(\mathcal H)_{H_4} &\cong \left \{ \begin{pmatrix}
            a & e^{2\pi i/3}b & e^{2\pi i/3}c\\ c & a & e^{-2\pi i/3}b \\ b & e^{-2\pi i/3}c & a
        \end{pmatrix}: ~a, b, c\in \mathbb C\right \}.
    \end{align*}
    Of course, each of those four algebras has Gelfand spectrum consisting of three points.
\end{ex}

\bibliographystyle{amsplain}
\bibliography{References}

\providecommand{\bysame}{\leavevmode\hbox to3em{\hrulefill}\thinspace}
\providecommand{\MR}{\relax\ifhmode\unskip\space\fi MR }
\providecommand{\MRhref}[2]{%
  \href{http://www.ams.org/mathscinet-getitem?mr=#1}{#2}
}
\providecommand{\href}[2]{#2}
\begin{thebibliography}{10}

\bibitem{Baggett_Kleppner1973}
L.~Baggett and A.~Kleppner, \emph{{Multiplier Representations of Abelian Groups}}, J. Funct. Anal. \textbf{14} (1973), 299--324.

\bibitem{Bauer_Rodriguez2022}
W.~Bauer and M.~A. Rodriguez~Rodriguez, \emph{{Commutative Toeplitz algebras and their Gelfand theory: old and new results}}, Complex Anal. Oper. Theory \textbf{16} (2022), 77--113, 77.

\bibitem{Dewage_Mitkovski2023}
V.~Dewage and M.~Mitkovski, \emph{{Density of Toeplitz operators in rotation-invariant Toeplitz algebras}}, preprint available on arXiv:2310.12367, 2023.

\bibitem{Dewage_Mitkovski2024}
\bysame, \emph{{The Laplacian of an operator and the radial Toeplitz algebra}}, preprint available on arXiv:2410.00080, 2024.

\bibitem{Dewage_Olafsson2022}
V.~Dewage and G.~\'{O}lafsson, \emph{{Toeplitz Operators on the Fock Space with Quasi-Radial Symbols}}, Complex Anal. Oper. Theory \textbf{16} (2022), Paper No. 61, 32, 61.

\bibitem{Esmeral_Maximenko2016}
K.~Esmeral and E.~A. Maximenko, \emph{Radial {T}oeplitz operators on the {F}ock space and square-root-slowly oscillating sequences}, Complex Anal. Oper. Theory \textbf{10} (2016), 1655–1677.

\bibitem{Esmeral_Vasilevski2016}
K.~Esmeral and N.~Vasilevski, \emph{{C$^\ast$-algebra generated by horizontal Toeplitz operators on the Fock space}}, Bol. Soc. Mat. Mex. \textbf{22} (2016), 567--582.

\bibitem{Feldman_Greenleaf_1968}
J.~Feldman and F.~P. Greenleaf, \emph{Existence of {B}orel transversals in groups}, Pac. J. Math. \textbf{25} (1968), 455--461.

\bibitem{Fulsche2020}
R.~Fulsche, \emph{{Correspondence theory on $p$-Fock spaces with applications to Toeplitz algebras}}, J. Funct. Anal. \textbf{279} (2020), 108661.

\bibitem{Fulsche_Galke2023}
R.~Fulsche and N.~Galke, \emph{{Quantum harmonic analysis on locally compact abelian groups}}, preprint available on arXiv:2308.02078, 2023.

\bibitem{Fulsche_Hagger2024}
R.~Fulsche and R.~Hagger, \emph{{Quantum harmonic analysis for polyanalytic Fock spaces}}, to appear in J. Fourier Anal. Appl., preprint available at arXiv:2308.11292.

\bibitem{Fulsche_Luef_werner2024}
R.~Fulsche, F.~Luef, and R.~F. Werner, \emph{{Wiener's Tauberian theorem in classical and quantum harmonic analysis}}, preprint available on arXiv:2405.08678, 2024.

\bibitem{Fulsche_Rodriguez2023}
R.~Fulsche and M.~A. Rodriguez~Rodriguez, \emph{{Commutative $G$-invariant Toeplitz C$^\ast$ algebras on the Fock space and their Gelfand theory through Quantum Harmonic Analysis}}, to appear in J.~Operator Theory, preprint available at arXiv:2307.15632.

\bibitem{Kehlet1984}
E.~T. Kehlet, \emph{Cross sections for quotient maps of locally compact groups}, Math. Scand. \textbf{55} (1984), no.~1, 152--160.

\bibitem{Kupka1983}
J.~Kupka, \emph{Strong liftings with application to measurable cross sections in locally compact groups}, Isr. J. Math. \textbf{44} (1983), 243--261.

\bibitem{Lee-Guzman2024}
E.~Lee-Guzm{\'a}n, E.~A. Maximenko, G.~Ramos-Vazquez, and A.~S{\'a}nchez-Nungaray, \emph{Horizontal {Fourier} transform of the polyanalytic {Fock} kernel}, Integral Equations Oper. Theory \textbf{96} (2024), no.~3, 29 (English), Id/No 22.

\bibitem{Luef_Skrettingland2018}
F.~Luef and E.~Skrettingland, \emph{{Convolutions for localization operators}}, J. Math. Pures Appl. \textbf{118} (2018), 288--316.

\bibitem{Mumford1991}
D.~Mumford, \emph{Tata lectures on {T}heta. {III}}, Progress in Mathematics, vol.~97, Birkh\"auser Boston, Inc., Boston, MA, 1991, With the collaboration of Madhav Nori and Peter Norman.

\bibitem{Vasilevski_2008}
N.~Vasilevski, \emph{{Commutative Algebras of Toeplitz Operators on the Bergman Space}}, Birkh\"{a}user Verlag, Basel, Boston, Berlin, 2008.

\bibitem{Werner1984}
R.~Werner, \emph{{Quantum harmonic analysis on phase space}}, J. Math. Phys. \textbf{25} (1984), 1404--1411.

\end{thebibliography}

\bigskip
\begin{multicols}{2}

\noindent
Robert Fulsche\\
\href{fulsche@math.uni-hannover.de}{\Letter ~fulsche@math.uni-hannover.de}
\\
\noindent
Institut f\"{u}r Analysis\\
Leibniz Universit\"at Hannover\\
Welfengarten 1\\
30167 Hannover\\
Germany

\noindent
Oliver F\"{u}rst\\
\href{ofuerst@math.uni-bonn.de}{\Letter ~ofuerst@math.uni-bonn.de}
\\
\noindent
Mathematisches Institut\\
Universit\"{a}t Bonn\\
Endenicher Allee 60\\
53115 Bonn\\
Germany

\end{multicols}

\end{document}